\newtheorem{lemma}{Lemma}[section] 
\newtheorem{proposition}[lemma]{Proposition}
\newtheorem{example}[lemma]{Example}
\newtheorem{cor}[lemma]{Corollary}
\newtheorem{definition}[lemma]{Definition}
\newcommand*{\doublenabla}{%
  \nabla\mkern-12mu\nabla
}   
\newcommand{\CM}{\hbox{{$\mathcal M$}}}
\newcommand{\C}{\mathbb{C}}
\newcommand{\R}{\mathbb{R}}
\newcommand{\Z}{\mathbb{Z}}
\newcommand{\ev}{\mathrm{ev}}
\newcommand{\coev}{\mathrm{coev}}
\newcommand{\extd}{\mathrm{d}}
\newcommand{\tens}{\mathop{{\otimes}}}
\newcommand{\la}{{\triangleright}}
\newcommand{\ra}{{\triangleleft}}
\newcommand{\id}{\mathrm{id}}
\newcommand{\<}{\langle}
\renewcommand{\>}{\rangle}
\newcommand{\cX}{\mathfrak{X}}
\title{Noncommutative geodesics and the KSGNS construction}
\author{Edwin Beggs \\
College of Science, Swansea University, Wales}
\date{}                                           
\begin{document}

\maketitle

\abstract{We study geodesics in noncommutative geometry by means of
 bimodule connections and completely positive maps using the Kasparov, Stinespring, Gel'fand, Na\u\i mark \& Segal (KSGNS) construction. This is motivated from classical geometry, and we also consider examples on the algebras $M_2(\C)$ and $C(\Z_n)$, though restricting to classical time $t\in\R$. On the way we have to consider the reality of a noncommutative vector field, and for this we propose a definition depending on a state on the algebra.}

\section{Introduction}
In classical geometry we frequently consider flows on manifolds due to vector fields (e.g.\ Morse theory) or vector fields as velocities along paths (e.g.\ geodesics). The definition of noncommutative vector field in generality was given by Borowiec \cite{Borowiec} in terms of a generalised derivation (a Cartan pair), and used to used to define cases of Lie brackets by Jara \& Llena
 \cite{JarLle}. Vector fields on Hopf algebras had been considered in \cite{AscSch,Ma:cla,LychBraid}. However, 
 it has been difficult to apply noncommutative vector fields to the two classical applications above.

One fundamental decision is just what sort of maps to take between $C^*$-algebras or dense subalgebras of these. For various purposes the class of maps has been extended beyond $*$-algebra maps. For example Connes \& Higson \cite{conHig} introduced asymptotic $*$-algebra maps (called asymptotic morphisms) for $E$-theory, and these maps were also used by D\~ad\~arlat \cite{DadAsy} and Manuilov \& Thomsen \cite{ManThom} for noncommutative shape theory. Connes introduced the idea of correspondences (bimodules) to study von Neumann algebras \cite{Connes}. Completely positive maps received attention from many authors, several of which (Kasparov, Stinespring, Gel'fand, Na\u\i mark \& Segal) appear in the name of the  KSGNS construction. For this construction and the theory of Hilbert $C^*$-bimodules we refer to the textbook \cite{Lance}. 

The purpose of this paper is to apply the KSGNS construction to noncommutative geodesics. The natural interpretation of these geodesics will be as paths in the state space of the algebra, i.e.\ completely positive maps from the algebra to $C^\infty(\R)$.
 Classically, evaluation at a point of a space $X$ is a state on $C(X)$, and moving the point along a path moves the state.
   Given the general setting of the noncommutatrive construction, it is likely that this restriction to the `time algebra' being $C^\infty(\R)$ is unnecessary, but here we shall stick to `real commutative time'.

The KSGNS construction represents a completely positive map between $C^*$-algebras as $\phi(a)=\<ma,\overline{m}\>$ in terms of an element $m$ of a Hilbert $C^*$-bimodule. The noncommutative theory of connections on bimodules has been studied for some time. We simply put these ingredients together, and look at examples. The critical result is that geodesics in classical differential geometry are precisely recovered as a special case. On the way we require a constraint on the vector fields involved which in the classical case simply amounts to the reality of the vector field.
Like other ideas generalising classical geometry, the reality of a vector field can only be defined with hindsight given by a sufficient number of theory and examples, so the `definition' here is merely a trial one.

The KSGNS construction is also well adapted to dealing with quantum theory, indeed it contains the usual theory of Hilbert spaces and observables (with some extension to unbounded operators). 
Paths in classical differential geometry have, at a given time, a precise position and a precise velocity. It is somewhat obvious that this idea will have to be modified in quantum theory, as there position and velocity (or rather momentum) obey the Heisenberg uncertainty principle. 
But why should parallel transport or geodesics make sense in quantum  theory?
 The answer is simply that we can observe geodesic motion in the real world, so if the real world is governed by quantum theory, then to some extent geodesics must still make sense. 

Quantum theory, by quantising the stress-energy tensor source term for gravity in General Relativity, \textit{de facto} quantises geometry, and at a scale conceivably much larger than the Planck length. There is no reason to expect that observations of quantum gravity will necessarily first take place in 
measurements of momentum eigenstates, in other words, in the normal domain of the perturbation theory solution methods of quantum field theory. 
Given the local nature of gravitational fields, it is likely that measurements of position will be involved.
To back up any such observations it would be necessary to have a theory allowing the calculation of positions in quantum gravity, and as quantum gravity may well manifest itself, at least to `first order', as noncommutative geometry, that may mean a physical theory of paths or world lines in noncommutative geometry.

As the reader will see, there is often great flexibility in extending classical ideas to noncommutative geometry. Saying that a particular way is \textit{the} way is not something to say lightly. The purpose of this paper is simply to show that \textit{a} way of addressing geodesics in noncommutative geometry exists, and that it can be applied to many examples. It proposes that
for a $C^\infty(\R)$-$A$-bimodule $M$
 the equation $\doublenabla(\sigma_M)=0$ is a reasonable and calculable extension of the equations for vector fields in classical geodesics, and that $\nabla_M(m)=0$ for $m\in M$ in the KSGNS construction above gives the corresponding time evolution on the state space.
 
There are other matters which we do not address, such as the constant speed of a classical geodesic and geodesic deviation. Addressing such matters, if it were possible, to avoid very long individual proofs would likely require methods for handling derivatives of maps (here denoted $\doublenabla$) compatible with tensor products, such as the extension of the monoidal DG category ${}_A\mathcal{G}{}_A$ in \cite{BegMa:bia} to mixed bimodules as a coloured monoidal DG category $\mathcal{G}$.
This paper is phrased in terms of bimodule connections as its basic object, and classically we might take the Levi Civita connection once we have a metric. By starting with bimodule connections we give a potentially more general discussion, and avoid another problem about just what connection to take for what sort of noncommutative metric. Further, it may be the case that quantum theory may have use for more general connections, e.g., in the presence of particle creation or annihilation in quantum mechanics we might well have positive functions with varying normalisation. As a possible example related to geometry, but well beyond the scope of this paper, Hawking radiation \cite{HawRad} produces particles at the expense of the intrinsic energy of space-time -- which is what the negative energy states disappearing into the black hole and reducing its mass effectively amount to.

In \cite{AppSto} there is a construction of quantum stochastic parallel transport processes
 which are possibly related to the methods in this paper.
The numerical equation solving and graphics were done on Mathematica, and some code for this is given in the Appendix.

\section{Preliminaries} \label{prelim}

Suppose that $A$ is a unital possibly noncommutative algebra over the field $\mathbb{C}$ (taking this choice to link with $C^*$-algebras later). 
We think of $A$ as the $\C$ valued functions on a hypothetical {\em{noncommutative manifold}}. An $A$-module will correspond to a vector bundle on this hypothetical manifold. 
The tensor product of vector bundles corresponds to taking $E\tens_A N$ where $E$ is a right and $N$ a left $A$-module. Here $E\tens_A N$ has elements $e\tens n$ for $e\in E$ and $n\in N$ where we set $e.a\tens n=e\tens a.n$ for all $a\in A$.

\begin{definition} A first order differential calculus $(\Omega^1,\extd)$ over $A$ means

{\rm (1)}\  $\Omega^1_A$ an $A$-bimodule.

{\rm (2)}\  A linear map $\extd:A\to \Omega^1_A$ (the exterior
derivative) with $\extd(a\,b)=(\extd a)\,b + a\,\extd b$\\
\phantom{{\rm (2)}\quad } for all $a,b\in A$. 

{\rm (3)}\  $\Omega^1_A={\rm span}\{a\,\extd b\mid a,b\in A\}$ (the surjectivity condition).

\noindent 
A (right) vector field on $A$, notation $v\in\cX_A$, is a right $A$-module map $v:\Omega^1_A\to A$. 

\end{definition}

\begin{example}
For the usual calculus on $\R^n$,  the algebra $A=C^\infty(\R^n)$  (functions on $\R^n$ which are differentiable infinitely 
 many times, and complex valued as noted earlier) has $\Omega^1_A$ the usual 1-forms on $\R^n$, i.e.\ $\xi_i\,\extd x^i$ (sum over $i$) for coordinates $x^1,\dots,x^n$ and $\xi_i\in C^\infty(\R^n)$. We will just call this $\Omega^1(\R^n)$ to avoid writing $C^\infty(\R^n)$ as a subscript. 

For the usual calculus on $\R^n$, the vector fields  $\cX(\R^n)$ 
are of the form $v=v^i\,\frac{\partial}{\partial x^i}$. They are maps from $\Omega^1(\R^n)$ to
$A=C^\infty(\R^n)$ via the evaluation $\ev:\cX(\R^n) \tens_A \Omega^1(\R^n)\to C^\infty(\R^n)$ which is $\ev(v^i\,\frac{\partial}{\partial x^i}\tens \xi_j\,\extd x^j)=v^i\,\xi_i$. 
We also have a dual basis of vector fields, which is expressed as a single element
 $\coev(1)=\extd x^i\tens \frac{\partial}{\partial x^i}\in \Omega^1(\R^n)  \tens_A \cX(\R^n)$
 or more categorically by the coevaluation bimodule map $\coev: A\to \Omega^1_A\tens_A \cX_A$. This has the property that
 $(\id\tens\ev)(\coev(1)\tens \xi)=\xi$ for all $\xi\in\Omega^1(\R^n)$, which is easily verified by
\[
(\id\tens\ev)\big( \extd x^i\tens \tfrac{\partial}{\partial x^i} \tens \xi_j\,\extd x^j\big)= \extd x^i\,\delta_{i,j}\, \xi_j=\xi_j\,\extd x^j\ .\quad\diamond
\]
  \end{example}
 
 We now give two examples of noncommutative calculi, the first on a noncommutative algebra and the second a noncommutative calcus on a commutative algebra.

\begin{example} \label{matcalcex}
Set $A=M_2(\C)$, the 2 by 2 complex matrices. This is given a calculus 
 where $\Omega^1_A$ is freely generated by two central generators $s^1$ and $s^2$, with
\[
\extd a=s^1\,[E_{12},a] + s^2\,[E_{21},a] 
\]
where $E_{ij}\in M_2(\C)$ has zero entries except for $1$ in the $ij$ position. We take $e_1,e_2$ to be the (central) dual basis of vector fields to $s^1,s^2$.
The $*$-operation is $s^1{}^*=-s^2$. 
 \hfill $\diamond$
\end{example}

\begin{example} \label{zncalcex}
The algebra $A=\C(\Z_n)$ of functions $f:\Z_n\to \C$ on the finite group $(\Z_n,+)$ with basis $\delta_i$ for $0\le i\le n-1$, which is the function $\delta_i(j)=\delta_{i,j}$. 
This has a calculus $\Omega^1_A$ with two non-central generators $e_{+1}$ and $e_{-1}$, where
\[ 
e_a.f=R_a(f)e_a\ ,\quad \extd f=e_{+1}(f-R_{-1}(f))+e_{-1}(f-R_{+1}(f))\ ,
\]
and $R_a(f)(i)=f(i+a)$ (mod $n$).
(This is a Hopf algebra with bicovariant calculus.) 
Let $\kappa_{\pm 1}$ be the dual basis of vector fields to $e_{\pm 1}$. The $*$-operation is $e_{\pm 1}^*=-e_{\mp 1}$. 
 \hfill $\diamond$
\end{example}

Bimodule connections were introduced in \cite{DVMass,DVMic,Mourad} and extensively used in \cite{Madore,FioMad}. However, here we use them in the more unusual context of mixed bimodules (different algebras on the left and right), and for that we refer to \cite{bbsheaf}. 
A $B$-$A$-bimodule $M$ is a left $B$-module and a right $A$-module, with the compatibility condition $(b.m).a=b.(m.a)$. 
This idea of bimodules strictly generalises the idea of the usual left or right modules for algebras over a field $\mathbb{K}$, as 
a  $B$-$\mathbb{K}$-bimodule is simply a left $B$-module and a $\mathbb{K}$-$A$-bimodule is simply a right $A$-module. 
We write ${}_B\CM_A$ as the category whose objects are $B$-$A$-bimodules, and whose morphisms are bimodule maps. Now suppose that $A$ and $B$ have differential calculi $\Omega_A$ and $\Omega_B$. 

\begin{definition} \label{tentat} A left $B$-$A$-bimodule connection on $M\in{}_B\CM_A$ means

\noindent{\rm (1)}\ A linear map $\nabla_M:M\to \Omega^1_B\tens_B M$ satisfying the left Liebniz rule
\[
\nabla_M(b.m)=\extd b\tens m+b.\nabla_M(m)\ ,\quad b\in B,\ m\in M.
\]
{\rm (2)}\  A $B$-$A$-bimodule map $\sigma_M:M\tens_A \Omega^1_A \to
\Omega^1_B\tens_B M$  such that 
\[ \nabla_M(m.a)=\nabla_M(m).a+\sigma_M(m\tens\extd a)\ ,\quad a\in A,\ m\in M.\]
\end{definition}

\medskip
An example of a left $C^\infty(\R^n)$-$C^\infty(\R^n)$-bimodule connection is a usual connection on the tangent space to $\R^n$. For $v=v^i\,\frac{\partial}{\partial x^i}$ we have
\begin{align} \label{rnconn}
\nabla(v^i\,\tfrac{\partial}{\partial x^i})=\extd x^k\tens( v^{i}{}_{,k}\,\tfrac{\partial}{\partial x^i}+v^i\,
\Gamma^j{}_{ki} \tfrac{\partial}{\partial x^j})\ ,\quad
\sigma(\tfrac{\partial}{\partial x^j}\tens\extd x^i)=\extd x^i \tens \tfrac{\partial}{\partial x^j}
\ .
\end{align}
Here the $\Gamma^j{}_{ki}$ are the usual Christoffel symbols, and note the common use of the subscript $,k$ for a partial derivative with respect to $x^k$. 

As originally noted in \cite{BDMS} for $A$-$A$-bimodules, but generalising to the current case, if we have $(Q,\nabla_Q,\sigma_Q)$ a left $C$-$B$-bimodule connection and $(M,\nabla_M,\sigma_M)$ a left $B$-$A$-bimodule connection, then we have a  left $C$-$A$-bimodule connection
on $Q\tens_B M$ by
\begin{align} \label{tpuor}
\nabla_{Q\tens M}(q\tens m)=\nabla_{Q}q\tens m+ (\sigma_{Q}\tens\id)(q\tens \nabla_Mm)\ ,\ \sigma_{Q\tens M}=(\sigma_Q\tens\id)(\id\tens\sigma_M)\ .
\end{align}
If $(M,\nabla_M,\sigma_M)$ and $(N,\nabla_N,\sigma_N)$ are left $B$-$A$-bimodule connections then
given a left module map $\theta:M\to N$ we define its derivative
\begin{align} \label{yerohd}
\doublenabla(\theta)=\nabla_N\,\theta-(\id\tens\theta)\,\nabla_M:M\to \Omega^1_B\otimes_B N\ .
\end{align}
The following result is an easy generalisation to the mixed context from \cite{BegMa:bia}.

\begin{proposition}  \label{bcuow}
For $\theta$ and $\doublenabla$ in (\ref{yerohd}), $\doublenabla(\theta)$ is a left module map. Further, supposing that $\theta$ is a bimodule map, then $\doublenabla(\theta)$ is a bimodule map if and only if $\sigma_N\circ(\theta\tens\id)=(\id\tens\theta)\circ\sigma_M$.
\end{proposition}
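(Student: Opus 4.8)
The plan is to prove both parts by direct computation, checking the module-map conditions on a general element and exploiting the Leibniz rules of Definition~\ref{tentat} together with the hypothesis that $\theta$ is a left (resp.\ bi-) module map. Throughout I would use that $\id\tens\theta:\Omega^1_B\tens_B M\to\Omega^1_B\tens_B N$ is left $B$-linear (immediate, since the left $B$-action sits on the $\Omega^1_B$ factor and $\theta$ is left $B$-linear), and, once $\theta$ is also right $A$-linear, that it is right $A$-linear as well (the right $A$-action sitting on the second factor).

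For part (1) I would evaluate $\doublenabla(\theta)$ on $b.m$ for $b\in B$, $m\in M$. Using $\theta(b.m)=b.\theta(m)$ and the left Liebniz rule for $\nabla_N$ gives $\nabla_N(\theta(b.m))=\extd b\tens\theta(m)+b.\nabla_N(\theta(m))$, while the left Liebniz rule for $\nabla_M$ followed by $\id\tens\theta$ gives $(\id\tens\theta)\nabla_M(b.m)=\extd b\tens\theta(m)+b.(\id\tens\theta)\nabla_M(m)$. Subtracting, the inhomogeneous terms $\extd b\tens\theta(m)$ cancel and one is left with $b.\doublenabla(\theta)(m)$, so $\doublenabla(\theta)$ is a left module map.

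For part (2), since left $B$-linearity is already established, it suffices to test right $A$-linearity. I would evaluate $\doublenabla(\theta)$ on $m.a$, now using that $\theta$ is a bimodule map so $\theta(m.a)=\theta(m).a$, and applying the right Liebniz rule (condition (2) of Definition~\ref{tentat}) to both $\nabla_N(\theta(m).a)$ and $\nabla_M(m.a)$. The two $\nabla(\cdots).a$ pieces reassemble into $\doublenabla(\theta)(m).a$, and the two $\sigma$-terms leave behind exactly
\[
\doublenabla(\theta)(m.a)-\doublenabla(\theta)(m).a=\big(\sigma_N\circ(\theta\tens\id)-(\id\tens\theta)\circ\sigma_M\big)(m\tens\extd a)\ .
\]
Thus $\doublenabla(\theta)$ is right $A$-linear precisely when the right-hand side vanishes for all $m\in M$ and $a\in A$.

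The one point requiring care -- and the place I would be most careful -- is the passage from this vanishing on elements $m\tens\extd a$ to the full operator identity $\sigma_N\circ(\theta\tens\id)=(\id\tens\theta)\circ\sigma_M$ on all of $M\tens_A\Omega^1_A$. This is exactly where the surjectivity condition (3) of the calculus enters: every $\omega\in\Omega^1_A$ is a sum of terms $a'\,\extd a$, and the tensor relation over $A$ rewrites $m\tens a'\,\extd a=(m.a')\tens\extd a$, so elements of the form $m\tens\extd a$ span $M\tens_A\Omega^1_A$. Hence vanishing on these spanning elements is equivalent to the stated identity, completing the proof. One should also note in passing that $\theta\tens\id$ and $\id\tens\theta$ are well defined on the relevant balanced tensor products precisely because $\theta$ is right $A$-linear and left $B$-linear respectively, and that $\sigma_M,\sigma_N$ are bimodule maps by the connection data, so both sides of the identity are genuine bimodule maps.
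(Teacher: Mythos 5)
Your proof is correct and follows essentially the same route as the paper's: the same two Leibniz-rule computations on $b.m$ and $m.a$, with the inhomogeneous $\extd b\tens\theta(m)$ terms cancelling in the first part and the two $\sigma$-terms isolating the obstruction in the second. Your explicit spanning argument (that elements $m\tens\extd a$ generate $M\tens_A\Omega^1_A$ via the surjectivity condition and the balanced tensor relation) is a point the paper leaves implicit, and it is handled correctly.
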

\begin{proof} 
Firstly, for $b\in B$ and $m\in M$,
\begin{align*}
\doublenabla(\theta)(bm)&=\nabla_N(b\,\theta(m))-(\id\tens\theta)\,\nabla_M(bm) \\
&= \extd b\tens \theta(m) + b.\nabla_N\theta(m)- \extd b\tens \theta(m) - b.(\id\tens\theta)\,\nabla_M(m)\ .
\end{align*}
Secondly, for $a\in A$,
\begin{align*}
\doublenabla(\theta)(ma)&=\nabla_N(\theta(m)a)-(\id\tens\theta)\,\nabla_M(ma) \\
&=\sigma_N(\theta(m)\tens\extd a) + \nabla_N(\theta(m)).a- (\id\tens\theta) \sigma_M(m\tens\extd a) - (\id\tens\theta)(\nabla_M(m)).a \\
&=\sigma_N(\theta(m)\tens\extd a)- (\id\tens\theta) \sigma_M(m\tens\extd a) + \doublenabla(\theta)(m).a\ .\qquad\square
\end{align*}
\end{proof}

 As we shall be concerned with positivity we shall deal exclusively with star algebras, and
 to avoid confusion we will be quite explicit about conjugate modules of these algebras.
 If $M$ is a $B$-$A$-bimodule then its conjugate $\overline{M}$ is an $A$-$B$-bimodule. Writing elements  $\overline{m}\in \overline{M}$ where $m\in M$ we have
 \[
 \overline{\lambda m+\mu n}= \lambda^*\,\overline{m}+ \mu^*\,\overline{n}\ ,\ 
 a.\overline{m}=\overline{m.a^*}\ ,\  \overline{m}.b=\overline{b^*.m}
 \]
 for all $m,n\in M$, $\lambda,\mu\in\C$, $a\in A$ and $b\in B$. For a $A$-$C$-bimodule $N$ we have the $C$-$B$-bimodule map 
 $\Upsilon:\overline{M\tens_A N}\to \overline{N} \tens_A \overline{M}$ defined by $\Upsilon(\overline{m\tens n})=\overline{ n}\tens
 \overline{m}$. 
 
 If $B$ is a $*$-algebra the $*$-operation is said to extend to $\Omega^1_B$ if
 there is a well defined antilinear map on $\Omega^1$ defined by $(a\,\extd b)^*=\extd b^*.a^*$. Given a left $B$-connection $\nabla_M$ on $M$ there is a right $B$-connection (i.e.\ satisfying a right version of the Liebniz rule in Definition~\ref{tentat})  $\tilde\nabla_{\overline{M}}$ on $\overline{M}$ given by $\tilde\nabla_{\overline{M}}(\overline{m})=\overline{n}\tens \xi^*$ where $\nabla(m)=\xi\tens n$ (see  \cite{BMriem}).

\section{Classical bimodule connections and geodesics} \label{classfun}
Consider manifolds $X$ of dimension $n$ and $Y$ of dimension $m$ with covariant derivatives on vector fields and forms given by
Christoffel symbols $\Gamma$ and $\Xi$ respectively. Suppose that there is a differentiable map $\gamma:Y\to X$. This induces an algebra map
$\tilde\gamma:C^\infty(X)\to C^\infty(Y)$ by $\tilde\gamma(f)=f\circ\gamma$. In particular, for coordinates $\{x^1,\dots,x^n\}$ of $X$ and $\{y^1,\dots,y^m\}$ of $Y$
we have $\tilde\gamma(x^i)=\gamma^i$ where we write $\gamma=(\gamma^1,\dots,\gamma^n)$. 

Given any right $C^\infty(Y)$-module $E$, we can define a right $C^\infty(X)$-module $E_{\tilde\gamma}$ by $e\,\ra\,f=e\,\tilde\gamma(f)$ for $e\in E$ and $f\in C^\infty(X)$. In particular we have a $C^\infty(Y)$-$C^\infty(X)$ bimodule $C^\infty(Y)_{\tilde\gamma}$, which is just $C^\infty(Y)$ as a left $C^\infty(Y)$ module. 

\begin{proposition} \label{propsigma}
For a differentiable map $\gamma:Y\to X$ we give we give $C^\infty(Y)_{\tilde\gamma}$ a left connection by
\[
\extd :C^\infty(Y)\to\Omega^1(Y)\cong \Omega^1(Y){\tens}_{ C^\infty(Y) } C^\infty(Y)\ .
\]
Then $\extd$ on $C^\infty(Y)_{\tilde\gamma}$ is a bimodule connection with
\[
\sigma(k\tens \extd f)=\extd(k\,\tilde\gamma(f)) - \extd(k)\,\tilde\gamma(f)=k\,\extd(\tilde\gamma(f))\in \Omega^1(Y)=\Omega^1(Y){\tens}_{ C^\infty(Y) } C^\infty(Y)
\]
for $k\in C^\infty(Y)_{\tilde\gamma}$ and $f\in C^\infty(X) $,
and in particular $\sigma(k\tens \extd x^i)=k\,\extd\gamma^i$. Also
\begin{align*}
\doublenabla(\sigma)(1\tens \extd x^i)&= \big(  \tfrac{\partial^2 \gamma^i}{\partial y^r\partial y^p}
- \tfrac{\partial \gamma^i}{\partial y^q}\, \Xi^q{}_{pr}
+(\Gamma^i{}_{jk}\circ\gamma)\,\tfrac{\partial \gamma^j}{\partial y^p} \,\tfrac{\partial \gamma^k}{\partial y^r} \big)\,\extd y^p\tens\extd y^r\ .
\end{align*}
\end{proposition}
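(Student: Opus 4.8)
The plan is to prove the proposition in two stages: first confirm that $\extd$ defines a bimodule connection on $C^\infty(Y)_{\tilde\gamma}$ with the stated $\sigma$, and then extract $\doublenabla(\sigma)$ from the tensor-product connection formula (\ref{tpuor}) together with the definition (\ref{yerohd}). For the first stage, the left Liebniz rule for $\nabla_M=\extd$ is just the ordinary Leibniz rule for the exterior derivative, so $\extd$ is a left connection. To identify $\sigma$ I would feed the defining identity $\nabla_M(m.f)=\nabla_M(m).f+\sigma(m\tens\extd f)$ with the right action $m.f=m\,\tilde\gamma(f)$: expanding $\extd(m\,\tilde\gamma(f))$ by Leibniz and subtracting $\extd(m).f=\extd(m)\,\tilde\gamma(f)$ leaves $m\,\extd\tilde\gamma(f)$, which is the claimed formula, and putting $f=x^i$ gives $\sigma(k\tens\extd x^i)=k\,\extd\gamma^i$. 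I would then note that this $\sigma$ is well defined on $M\tens_A\Omega^1(X)$ and is a bimodule map, since any one-form on $X$ is $g\,\extd f$ and moving $g$ across the balanced tensor over $A=C^\infty(X)$ turns it into the right action $\tilde\gamma(g)$, consistent with $\sigma(k\tens g\,\extd f)=k\,\tilde\gamma(g)\,\extd\tilde\gamma(f)$.

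For the second stage I first record the two tensor-product connections that $\doublenabla(\sigma)$ requires. Here $\sigma$ is regarded as a left module map from the source $S=C^\infty(Y)_{\tilde\gamma}\tens_{C^\infty(X)}\Omega^1(X)$ to the target $N=\Omega^1(Y)\tens_{C^\infty(Y)}C^\infty(Y)_{\tilde\gamma}$, and both carry the connection built by (\ref{tpuor}) out of $\extd$ on $C^\infty(Y)_{\tilde\gamma}$ together with the cotangent connections on $X$ and $Y$. With the usual sign convention these read $\nabla_{\Omega^1(X)}(\extd x^i)=-\Gamma^i{}_{jk}\,\extd x^j\tens\extd x^k$ and $\nabla_{\Omega^1(Y)}(\extd y^q)=-\Xi^q{}_{pr}\,\extd y^p\tens\extd y^r$, the first lower index being the differentiation direction. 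The definition (\ref{yerohd}) then splits $\doublenabla(\sigma)(1\tens\extd x^i)$ into $\nabla_N\big(\sigma(1\tens\extd x^i)\big)$ minus $(\id\tens\sigma)\,\nabla_S(1\tens\extd x^i)$.

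For the first term, $\sigma(1\tens\extd x^i)=\extd\gamma^i$, and because $\extd(1)=0$ kills the $C^\infty(Y)_{\tilde\gamma}$-slot in the target connection it reduces to $\nabla_{\Omega^1(Y)}(\extd\gamma^i)$; expanding $\extd\gamma^i=\frac{\partial\gamma^i}{\partial y^q}\extd y^q$ by Leibniz produces the second-derivative term $\frac{\partial^2\gamma^i}{\partial y^p\partial y^r}$ and, through $\nabla_{\Omega^1(Y)}(\extd y^q)$, the term $-\frac{\partial\gamma^i}{\partial y^q}\Xi^q{}_{pr}$. For the second term, $\extd(1)=0$ again removes one summand of $\nabla_S$, leaving $(\sigma_M\tens\id)\big(1\tens\nabla_{\Omega^1(X)}(\extd x^i)\big)$; applying $\sigma_M$ to the first cotangent slot (which also pulls back the coefficient $\Gamma^i{}_{jk}$ to $\Gamma^i{}_{jk}\circ\gamma$ via the balancing over $A$) and then the outer $\id\tens\sigma$ to the surviving $\extd x^k$ converts both $X$-forms into $\extd\gamma^j$ and $\extd\gamma^k$; the minus from $-\Gamma$ and the minus in (\ref{yerohd}) cancel, giving $+(\Gamma^i{}_{jk}\circ\gamma)\frac{\partial\gamma^j}{\partial y^p}\frac{\partial\gamma^k}{\partial y^r}$. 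Summing the two terms yields the stated expression.

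The hard part will be bookkeeping rather than ideas: one must keep careful track of which tensor slot each function and form occupies across the balanced products, apply $\sigma$ twice in the second term (once inside $\nabla_S$ through $\sigma_M\tens\id$, once through the outer $\id\tens\sigma$) so that the leftover $\extd x^k$ is correctly pulled back to $\extd\gamma^k$, and fix the cotangent sign conventions so that the $\Gamma$ and $\Xi$ contributions appear with the signs shown. The repeated use of $\extd(1)=0$ is what collapses both tensor-product connections to single terms and keeps the computation tractable.
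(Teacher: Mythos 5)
Your proposal is correct and follows essentially the same route as the paper: the paper also reads off $\sigma$ directly from the Leibniz rule, then computes $\doublenabla(\sigma)(1\tens\extd x^i)$ by the decomposition (\ref{yerohd}), using $\extd 1=0$ to collapse both tensor-product connections so that only $\nabla_{\Omega^1(Y)}(\extd\gamma^i)$ and $(\Gamma^i{}_{jk}\circ\gamma)(\id\tens\sigma)(\sigma\tens\id)(1\tens\extd x^j\tens\extd x^k)$ survive, with the same cotangent sign conventions and the same cancellation of signs. Your write-up just makes explicit the bookkeeping (well-definedness of $\sigma$ over the balanced tensor product, pullback of $\Gamma^i{}_{jk}$ across it) that the paper leaves implicit.
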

\noindent\textbf{Proof:}\quad The formula for $\sigma$ is immediate from what is written. Next	
\begin{align*}
\doublenabla(\sigma)(1\tens \extd x^i)&= \nabla_{\Omega^1(Y)}\sigma(1\tens \extd x^i)
-(\id\tens\sigma)\nabla_{C^\infty(Y)\tens\Omega^1(X)}(1\tens \extd x^i) \\
&= \nabla_{\Omega^1(Y)}( \tfrac{\partial \gamma^i}{\partial y^r}\,\extd y^r)
+(\Gamma^i{}_{jk}\circ\gamma)(\id\tens\sigma)(\sigma\tens\id)(1\tens\extd x^j\tens\extd x^k) \ .\tag*{$\square$}
\end{align*}

\medskip In this classical case there is an alternative point of view.
 As $\gamma:Y\to X$ is differentiable it extends to $\tilde\gamma_*:\Omega^1(X)\to \Omega^1(Y)$ as a bimodule map, so we can define
\begin{align} \label{restricted}
\doublenabla_\gamma(\tilde\gamma_*)(\extd x^i)&=\big(\nabla_{\Omega^1(Y)}\circ\tilde\gamma_*-
(\tilde \gamma_*\tens\tilde \gamma_*)\circ \nabla_{\Omega^1(X)}\big)(\extd x^i)
\cr &= \nabla_{\Omega^1(Y)} (\tfrac{\partial \gamma^i}{\partial y^j} \,\extd y^j) -(\tilde \gamma_*\tens\tilde \gamma_*)(-\Gamma^i{}_{rs}\extd x^r\tens\extd x^s) \cr
&= \big(  \tfrac{\partial^2 \gamma^i}{\partial y^r\partial y^p}
- \tfrac{\partial \gamma^i}{\partial y^q}\, \Xi^q{}_{pr}
+(\Gamma^i{}_{jk}\circ\gamma)\,\tfrac{\partial \gamma^j}{\partial y^p} \,\tfrac{\partial \gamma^k}{\partial y^r} \big)\,\extd y^p\tens\extd y^r
\end{align}
so we see that $\doublenabla(\sigma)(1\tens \extd x^i)=\doublenabla_\gamma(\tilde\gamma_*)(\extd x^i)$. 

In the simplest case where $Y=\R$ with coordinate function $y$ and vanishing
Christoffel symbols $\Xi$ we have $\gamma:\R\to X$ and
\begin{align*}
\doublenabla(\sigma)(1\tens \extd x^i)&= \big(  \tfrac{\partial^2 \gamma^i}{\partial y^2}
+(\Gamma^i{}_{jk}\circ\gamma)\,\tfrac{\partial \gamma^j}{\partial y} \,\tfrac{\partial \gamma^k}{\partial y} \big)\,\extd y\tens\extd y
\end{align*}
and so $\doublenabla(\sigma)=0$ is the equation for $\gamma:\R\to X$ to be a geodesic. More generally for $Y$ an $m$-dimensional manifold  a geodesic in $Y$ with parameter $t$ is given by
$
\ddot y^i+\Xi^i{}_{jk}\,\dot y^j,\dot y^k=0
$. 
Then $\gamma(y(t))$ obeys
\begin{align*}
\ddot \gamma^i+&\Gamma^i{}_{jk}\,\dot \gamma^j\dot \gamma^k=\tfrac{\extd}{\extd t}(
\tfrac{\partial \gamma^i}{\partial y^r}\,\dot y^r)+\Gamma^i{}_{jk}\,
\tfrac{\partial \gamma^j}{\partial y^r}\,\dot y^r\,\tfrac{\partial \gamma^k}{\partial y^s}\,\dot y^s\cr
&=\tfrac{\partial \gamma^i}{\partial y^r}\,\ddot y^r+ \tfrac{\partial^2 \gamma^i}{\partial y^r\partial y^s}\,\dot y^r\,\dot y^s+\Gamma^i{}_{jk}\,
\tfrac{\partial \gamma^j}{\partial y^r}\,\dot y^r\,\tfrac{\partial \gamma^k}{\partial y^s}\,\dot y^s
=\big( \tfrac{\partial^2 \gamma^i}{\partial y^r\partial y^s} - \tfrac{\partial \gamma^i}{\partial y^p}\Xi^p{}_{rs}+\Gamma^i{}_{jk}\,
\tfrac{\partial \gamma^j}{\partial y^r}\,\tfrac{\partial \gamma^k}{\partial y^s}
\big)\dot y^r\dot y^s\ .
\end{align*}
As was pointed out by Sebastian Goette \cite{MO-sebgo}, this means that the
 condition $\doublenabla(\sigma)=0$ implies that 
every geodesic on $Y$ is mapped by $\gamma$ to a geodesic on $X$.

\medskip
For a Riemannian manifold the geodesics are the paths of (locally) minimal distance, and are given by the Levi Civita covariant derivative of the velocity vector along itself being zero. 
The reader should recall from standard theory (e.g.\ \cite{OneillRiem}) that a totally geodesic submanifold is one in which every geodesic in the big manifold starting at a point in the submanifold and with initial velocity along the tangent space to the submanifold remains in the submanifold for all time. Note that whereas the existence of geodesics is standard, the existence of
totally geodesic submanifolds of dimension strictly between 1 and the dimension of the manifold is a nontrivial condition on the manifold.  
It is important to note that we shall consider more general connections than just the Levi Civita one in this paper, with correspondingly more general `geodesics'.

The velocity of a geodesic is of paramount importance, and we note how it is encoded in the $\sigma$ notation. In the case $Y=\R$ with coordinate $t$, the velocity is simply $v^i=\frac{\extd\gamma^i(t)}{\extd t}$. The formula for $\sigma$ in Proposition~\ref{propsigma} reduces to
\begin{align} \label{sigcalc}
\sigma(1\tens \extd x^i)=\tfrac{\extd\gamma^i(t)}{\extd t}\,\extd t\tens 1.
\end{align}
Defining the velocity in isolation in more generality will have to wait until we have discussed the KSGNS construction for positive maps.

\section{Noncommutative paths and the KSGNS construction}

From the point of view of quantum mechanics, the Heisenberg uncertainty principle makes it likely that an idea of a geodesic as a single path will have to be replaced by a more uncertain or `probabilistic' idea, as we cannot precisely measure both position and velocity (momentum) at the same time, and the geodesic depends on both of these. 

In Section~\ref{classfun} we constructed the bimodule $C^\infty(Y)_{\tilde\gamma}$ from a function
$\gamma:Y\to X$ which induced an algebra map
$\tilde\gamma:C^\infty(X)\to C^\infty(Y)$. In noncommutative geometry it will prove 
impractical to follow this pattern, as in general there are simply not enough such algebra maps -- instead we shall look for completely positive maps. For $C^*$-algebras the completely positive maps are given by the KSGNS construction, and
for the general theory of Hilbert $C^*$-bimodules including a proper description of the KSGNS construction we refer to \cite{Lance}. We shall only use part of the KSGNS construction, and in particular shall not say that we consider all completely positive maps.

The algebras we consider are algebras of `differentiable functions' rather than $C^*$-algebras, but they are frequently dense $*$-subalgebras of $C^*$-algebras (or even local $C^*$-algebras as in \cite{BlackKth}). 
We write the conjugates in the inner products explicitly using the bar notation of Section~\ref{prelim} (to allow the use of connections), and swap sides of the conjugate when compared with \cite{Lance} (to continue using left connections).

\begin{definition}
For $A$ and $B$ which are dense $*$-subalgebras of some $C^*$ algebras
 and a $B$-$A$-bimodule $M$, a positive semi-inner product on $M$ is a bimodule map
$\<\,,\,\>:M\tens_A\overline{M}\to B$ which obeys $\<m,\overline{m'}\>^*=\<m',\overline{m}\>$ for all $m,m'\in M$ and where $\<m,\overline{m}\>$ is positive in $B$ for all $m\in M$. 
\end{definition}

I shall only briefly describe the KSGNS construction, as getting into details of the $C^*$-algebra construction is not required. Basically it says that completely positive maps $\psi:A\to B$ for $C^*$-algebras $A$ and $B$ are all given by $B$-$A$-bimodules $M$ with positive inner products
using the formula $\psi(a)=\<m.a,\overline{m}\>$ for some $m\in M$. One example is a Hilbert space, where $B=\C$, though this is usually written with the conjugate on the other side. A particular case would be the Schr\"odinger picture of quantum mechanics, where $A$ is the algebra of quantum observables and $M=L^2(\R^3)$.

\begin{example}
Now we explain what Proposition~\ref{propsigma} has to do with the KSGNS construction, where $B=C^\infty(Y)$, $A=C^\infty(X)$
and $M=C^\infty(Y)_{\tilde\gamma}$.  First define a positive inner product $\<\,,\>:M\tens \overline{M}\to 
C^\infty(Y)$ by $\<f,\overline{g}\>=f\,g^*$. Given the usual right $C^\infty(Y)$ action on the conjugate bimodule, $\overline{g}\ra k= \overline{k^*g}$, the inner product is a $C^\infty(Y)$-bimodule map. Using the usual left $C^\infty(X)$ action on the conjugate $\overline{M}$ corresponding to the right $C^\infty(X)$ action on $M$, we can check that we get a well defined map $\<\,,\>:M\tens_{C^\infty(X)} \overline{M}\to 
C^\infty(Y)$. To do this we use the following, restricting to the case of a real coordinate function $x^i$ on $X$ simply to avoid explicitly writing compositions
\begin{align*}
\<f\ra x^i,\overline{k}\>=\gamma^i\,\<f,\overline{k}\>,\quad \<f,x^i\la \overline{k}\>=\<f,\overline{k\ra x^i}\>=\gamma^i\,\<f,\overline{k}\>\ .
\qquad\qquad \diamond
\end{align*}
\end{example}

Now we consider a simple noncommutative example for a $*$-algebra $A$.

\begin{example} \label{exnofri}
Take a $C^\infty(\R)$-$A$-bimodule $M$ to be just $C^\infty(\R)\tens A$, with inner product
\begin{align} \label{yjkp}
\<f(t)\tens a,\overline{g(t)\tens b}\>_M=f(t)g(t)^*\,\<a, \overline{b}\>_A \in C^\infty(\R)
\end{align}
where $\<\,, \>_A$ is a fixed (time independent) inner product on $A$. 
\end{example}

For solving differential equations later we should really take  $\C^\infty(\R,A)$, the functions of time $t$ with values in $A$, rather than $C^\infty(\R)\tens A$, which is a proper subset if $A$ is infinite dimensional. However we ignore the technicalities required to define $\C^\infty(\R,A)$ and continue with $M=C^\infty(\R)\tens A$ as our examples are either finite dimensional or based on functions on manifolds. However, we shall write algebra or module valued functions of time at various places.

\section{Connections and the geodesic velocity equation}

We now consider the connections needed to generalise the classical Proposition~\ref{propsigma} to the noncommutative case of
Example~\ref{exnofri}.

\begin{proposition} \label{prido}
For a unital algebra $A$ with calculus $\Omega_A$ and $C^\infty(\R)$ with its usual calculus $\Omega(\R)$ we set  $M=C^\infty(\R)\tens A$ regarded as a $C^\infty(\R)$-$A$-bimodule. 
Then a general left bimodule connection on $M$ is of the form, for $c\in C^\infty(\R)\tens A$ and  $\xi\in\Omega^1_A$
\[
\nabla_M(c)=\extd t\tens \big(bc + \tfrac{\partial c}{\partial t} +K(\extd c)\big)\ ,\ \sigma_M(1\tens\xi)=\extd t\tens K(\xi)
\]
for some $b\in C^\infty(\R)\tens A$ and $K\in C^\infty(\R)\tens \cX_A$. [Note that explicitiy including time evaluation we have
$K(\eta)(t)=K(t)(\eta(t))$ for $\eta\in C^\infty(\R)\tens \Omega^1_A$.]
\end{proposition}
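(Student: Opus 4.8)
The plan is to reduce the abstract connection data $(\nabla_M,\sigma_M)$ to two concrete pieces of data by exploiting that $\Omega^1(\R)$ is free of rank one on the central generator $\extd t$. First I would observe that, since $\Omega^1(\R)=C^\infty(\R)\,\extd t$ with $\extd t$ central, the map $m\mapsto \extd t\tens m$ is an isomorphism $M\to \Omega^1(\R)\tens_{C^\infty(\R)}M$. Hence I may write $\nabla_M(c)=\extd t\tens D(c)$ and $\sigma_M(\zeta)=\extd t\tens S(\zeta)$ for uniquely determined $\C$-linear maps $D:M\to M$ and $S:M\tens_A\Omega^1_A\to M$, and the whole argument becomes a matter of pinning down $D$ and $S$.

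Next I would feed the two axioms of Definition~\ref{tentat} into this presentation. The left Leibniz rule, using $\extd f=\frac{\partial f}{\partial t}\,\extd t$ and the centrality of $\extd t$, becomes $D(f c)=\frac{\partial f}{\partial t}\,c+f\,D(c)$ for $f\in C^\infty(\R)$; equivalently $D-\frac{\partial}{\partial t}$ is a left $C^\infty(\R)$-module map, where $\frac{\partial}{\partial t}=\frac{\partial}{\partial t}\tens\id$ on $C^\infty(\R)\tens A$. Separately I would analyse $\sigma_M$: since $M\tens_A\Omega^1_A$ is generated as a left $C^\infty(\R)$-module by the elements $1_M\tens\xi$ (because $(f\tens a)\tens\xi=f.(1_M\tens a.\xi)$ after balancing over $A$), left $C^\infty(\R)$-linearity of $\sigma_M$ shows it is completely determined by $K(\xi):=S(1_M\tens\xi)$. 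Right $A$-linearity then forces $K(\xi.a)=K(\xi).a$, so that at each time $K$ is a right $A$-module map $\Omega^1_A\to A$, i.e.\ $K\in C^\infty(\R)\tens\cX_A$ in the pointwise sense of the bracketed remark; this identification of $\Hom_A(\Omega^1_A,C^\infty(\R)\tens A)$ with time-dependent vector fields is the one genuinely non-formal step.

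Finally I would combine the two axioms to read off $b$. Setting $b:=D(1_M)$ and applying the $\sigma$-compatibility $D(c.a)=D(c).a+S(c\tens\extd a)$ at $c=1_M$ gives $D(1\tens a)=b.(1\tens a)+K(\extd a)$. Writing a general generator as $c=f.(1\tens a)$ and invoking the left Leibniz identity from the previous step yields $D(f\tens a)=\frac{\partial f}{\partial t}\,(1\tens a)+f\,b\,(1\tens a)+f\,K(\extd a)$, which is exactly $b c+\frac{\partial c}{\partial t}+K(\extd c)$ once $\extd$ on $M$ is read as $\id\tens\extd_A:C^\infty(\R)\tens A\to C^\infty(\R)\tens\Omega^1_A$. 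Extending by $\C$-linearity over the generators $f\tens a$ then gives the stated formula for all $c\in M$, and the formula for $\sigma_M(1\tens\xi)$ is just the definition of $K$.

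The main obstacle I anticipate is not any single computation but the bookkeeping: three different products appear — the left $C^\infty(\R)$-action, the right $A$-action, and the algebra multiplication of $C^\infty(\R)\tens A$ used to make sense of $b c$ — and one must keep them straight when rewriting $D(f\tens a)$ in terms of $D(1_M)$ and $D(1\tens a)$. One also has to verify the small consistency that $\frac{\partial}{\partial t}(1_M)=0$ and $\extd(1_M)=0$, so that the formula indeed returns $b=D(1_M)$ at $c=1_M$. The only conceptual subtlety, already flagged, is justifying that $K$ really is a $C^\infty(\R)$-family of vector fields rather than merely an element of $\Hom_A(\Omega^1_A,M)$.
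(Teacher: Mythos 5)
Your proposal is correct and follows essentially the same route as the paper: determine $\sigma_M$ by its value $\sigma_M(1\tens\xi)=\extd t\tens K(\xi)$ (with right $A$-linearity making $K$ a time-dependent vector field), set $b$ via $\nabla_M(1)=\extd t\tens b$, and expand $\nabla_M(f\,a)$ using the left Leibniz rule together with the $\sigma$-compatibility $\nabla_M(1.a)=\nabla_M(1).a+\sigma_M(1\tens\extd a)$. Your extra bookkeeping with the maps $D$ and $S$ via the isomorphism $M\cong\Omega^1(\R)\tens_{C^\infty(\R)}M$ is just a more explicit presentation of the same argument.
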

\begin{proof} The $C^\infty(\R)$-$A$-bimodule map $\sigma_M:M\tens_A \Omega^1_A\to \Omega^1(\R)\tens_{C^\infty(\R)} M$ is
uniquely specified by its value $\sigma_M(1\tens\xi) = \extd t\tens K(\xi)\in \Omega^1(\R)\tens_{C^\infty(\R)} M$ for $\xi\in\Omega^1_A$,  where $K:\Omega^1_A\to C^\infty(\R)\tens A$ is a right $A$-module map (i.e.\ a time dependent vector field), because $\sigma_M$ is a right $A$ module map. Next $\nabla_M(1)\in \Omega^1(\R)\tens_{C^\infty(\R)} M$ must be  $\extd t\tens b$ for some $b\in C^\infty(\R)\tens A$.
Now put $c(t)=f(t)\,a$ for $f\in\C^\infty(\R)$ and $a\in A$ and calculate
\begin{align*}
\nabla_M(fa)&=\tfrac{\partial f}{\partial t}\,\extd t\tens a+f\,\nabla_M(a)
=\tfrac{\partial f}{\partial t}\,\extd t\tens a+f\,\nabla_M(1)a+f\,(\nabla_M(1.a)-\nabla_M(1)a)
\\
&= \tfrac{\partial f}{\partial t}\,\extd t\tens a+f\,\nabla_M(1)a+f\,\sigma_M(1\tens\extd a)\ .\qquad\qquad\square
\end{align*}
\end{proof}

\begin{proposition} \label{prido99}
Take $A$ and $(M,\nabla_M,\sigma_M)$ as in Proposition~\ref{prido}. Also take the trivial connection on $\Omega^1(\R)$ (i.e.\ $\nabla_{\Omega^1(\R)}(f(t)\,\extd t)=\extd t\tens \tfrac{\partial f}{\partial t}\,\extd t$ and $\sigma_{\Omega^1(\R)}(\extd t\tens \extd t)=\extd t\tens \extd t$) and a left bimodule connection $\nabla_{\Omega^1_A}$ on the $A$-bimodule $\Omega^1_A$
with invertible $\sigma_{\Omega^1_A}:\Omega^1_A\tens_A\Omega^1_A\to \Omega^1_A\tens_A\Omega^1_A$. Then $\doublenabla(\sigma_M)=0$ if and only if both $K(K\tens\id)\sigma_{\Omega^1_A}=K(K\tens\id)$ and  for all $\xi\in\Omega^1_A$ (constant in time)
\begin{align}\label{gedef}
 \tfrac{\partial K(\xi)}{\partial t}  = K(b\xi) - bK(\xi) +K(K\tens\id)\sigma_{\Omega^1_A}^{-1}\nabla_{\Omega^1_A}(\xi) - K(\extd K(\xi))\ .
\end{align}
Further, if $K$ satisfies (\ref{gedef}) for $\xi\in\Omega^1_A$ then it also satisfies it for $\xi\,a\in\Omega^1_A$ for all $a\in A$, so it is only necessary to verify (\ref{gedef})
for a collection of right generators of $\Omega^1_A$.
\end{proposition}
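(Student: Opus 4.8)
The plan is to evaluate $\doublenabla(\sigma_M)$ directly on left-module generators and then repackage the resulting identity. Since $M\tens_A\Omega^1_A\cong C^\infty(\R)\tens\Omega^1_A$ is generated as a left $C^\infty(\R)$-module by the elements $1\tens\xi$ with $\xi\in\Omega^1_A$, and $\doublenabla(\sigma_M)$ is a left module map by Proposition~\ref{bcuow}, the condition $\doublenabla(\sigma_M)=0$ is equivalent to $\doublenabla(\sigma_M)(1\tens\xi)=0$ for all $\xi$. First I would compute the two pieces of (\ref{yerohd}). For the target term I apply the tensor-product connection (\ref{tpuor}) to $N=\Omega^1(\R)\tens_{C^\infty(\R)}M$ at $\sigma_M(1\tens\xi)=\extd t\tens K(\xi)$; because $\nabla_{\Omega^1(\R)}(\extd t)=0$ and $\sigma_{\Omega^1(\R)}(\extd t\tens\extd t)=\extd t\tens\extd t$, only the Leibniz part of $\nabla_M$ survives, giving the coefficient $bK(\xi)+\tfrac{\partial K(\xi)}{\partial t}+K(\extd K(\xi))$ against $\extd t\tens\extd t$. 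For the source term I apply (\ref{tpuor}) to $M\tens_A\Omega^1_A$ using $\nabla_M(1)=\extd t\tens b$ and, writing $\nabla_{\Omega^1_A}(\xi)=\xi\o\tens\xi\t$, then hit the result with $\id\tens\sigma_M$. Absorbing the $M$-factors $b$ and $K(\xi\o)$ into the $\Omega^1_A$ slot through the balanced tensor products turns $\sigma_M(b\tens\xi)$ into $\extd t\tens K(b\xi)$ and $\sigma_M(K(\xi\o)\tens\xi\t)$ into $\extd t\tens K(K\tens\id)\nabla_{\Omega^1_A}(\xi)$.

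Subtracting, $\doublenabla(\sigma_M)(1\tens\xi)=0$ is exactly the \emph{raw} equation
\[
\tfrac{\partial K(\xi)}{\partial t}=K(b\xi)-bK(\xi)+K(K\tens\id)\nabla_{\Omega^1_A}(\xi)-K(\extd K(\xi)),\qquad(\star)
\]
which differs from (\ref{gedef}) only by the absence of $\sigma_{\Omega^1_A}^{-1}$. The next step is to extract the side condition from $(\star)$. I would write $(\star)$ for $\xi a$ with $a\in A$ constant in time and compare with $(\star)$ for $\xi$ multiplied on the right by $a$. Most terms are manifestly right $A$-linear, but two are not: the Leibniz rule $\nabla_{\Omega^1_A}(\xi a)=\nabla_{\Omega^1_A}(\xi)a+\sigma_{\Omega^1_A}(\xi\tens\extd a)$ produces the remainder $K(K\tens\id)\sigma_{\Omega^1_A}(\xi\tens\extd a)$, while $\extd K(\xi a)=(\extd K(\xi))a+K(\xi)\extd a$ produces $-K(K(\xi)\extd a)=-K(K\tens\id)(\xi\tens\extd a)$. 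Requiring these to cancel for all $\xi,a$, and using that $\{\xi\tens\extd a\}$ spans $\Omega^1_A\tens_A\Omega^1_A$ by the surjectivity axiom, gives precisely $K(K\tens\id)\sigma_{\Omega^1_A}=K(K\tens\id)$. Conversely this side condition yields $K(K\tens\id)=K(K\tens\id)\sigma_{\Omega^1_A}^{-1}$, under which $(\star)$ and (\ref{gedef}) coincide; this establishes the stated equivalence.

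For the \emph{Further} claim I would show that (\ref{gedef}), unlike $(\star)$, is automatically consistent under right multiplication by $A$. Running the same Leibniz computation on (\ref{gedef}) for $\xi a$, the term $K(K\tens\id)\sigma_{\Omega^1_A}^{-1}\nabla_{\Omega^1_A}(\xi a)$ now contributes the extra piece $K(K\tens\id)\sigma_{\Omega^1_A}^{-1}\sigma_{\Omega^1_A}(\xi\tens\extd a)=+K(K(\xi)\extd a)$, using that $\sigma_{\Omega^1_A}^{-1}$ is a bimodule map, and this exactly cancels the $-K(K(\xi)\extd a)$ coming from $K(\extd K(\xi a))$. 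Hence (\ref{gedef}) for $\xi$ forces it for $\xi a$, and by $\C$-linearity in $\xi$ it suffices to verify (\ref{gedef}) on any right-generating set of $\Omega^1_A$.

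The main obstacle I anticipate is bookkeeping: correctly tracking the balanced tensor relations over $A$ so that $M$-valued quantities are absorbed as $K(b\xi)$ and $K(K\tens\id)\nabla_{\Omega^1_A}(\xi)$, together with the implicit time-dependence of $K$ and $b$. The conceptually delicate point is recognising that $\sigma_{\Omega^1_A}$ never appears in the generator computation itself and enters only through right $A$-linearity: the side condition is forced by well-definedness of $(\star)$ across the tensor product, while the $\sigma_{\Omega^1_A}^{-1}$ in (\ref{gedef}) is exactly what repairs right-linearity and makes the equation checkable on generators.
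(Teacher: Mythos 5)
Your proposal is correct, and its core matches the paper's proof: you evaluate $\doublenabla(\sigma_M)$ on the generators $1\tens\xi$ via the tensor-product connection formula to obtain the raw identity $(\star)$, which is exactly the paper's intermediate equation (\ref{procw}), and you then use the constraint $K(K\tens\id)\sigma_{\Omega^1_A}=K(K\tens\id)$ together with invertibility of $\sigma_{\Omega^1_A}$ to pass between $(\star)$ and (\ref{gedef}); your Leibniz-rule cancellation for the ``Further'' claim is also the same as the paper's. The one genuine difference is where the constraint comes from. The paper gets it abstractly: since the zero map $\doublenabla(\sigma_M)$ is in particular a bimodule map, Proposition~\ref{bcuow} forces $\sigma_{\Omega^1(\R)\tens M}(\sigma_M\tens\id)=(\id\tens\sigma_M)\sigma_{M\tens\Omega^1_A}$, i.e.\ the braid relation (\ref{outfd}), whose evaluation on $1\tens\xi\tens\eta$ gives $K(K\tens\id)\sigma_{\Omega^1_A}=K(K\tens\id)$. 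You instead extract it concretely from $(\star)$ itself, by comparing $(\star)$ at $\xi a$ with $(\star)$ at $\xi$ multiplied on the right by $a$, and observing that the non-right-linear remainder $K(K\tens\id)(\sigma_{\Omega^1_A}-\id)(\xi\tens\extd a)$ must vanish, where the elements $\xi\tens\extd a$ span $\Omega^1_A\tens_A\Omega^1_A$ by the surjectivity axiom. That argument is sound --- it is, in effect, an inline re-proof of the right-linearity half of Proposition~\ref{bcuow} specialised to $\theta=\sigma_M$ --- and it is slightly more elementary and self-contained; what the paper's route buys is conceptual packaging, recognising the side condition once and for all as a bimodule-map/braiding condition, in the same form in which it is re-used and checked directly elsewhere (e.g.\ for the second $M_2(\C)$ example). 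Both directions of the equivalence and the reduction to right generators are handled correctly in your write-up, including the key observation that the $\sigma_{\Omega^1_A}^{-1}$ inserted in (\ref{gedef}) is precisely what restores right $A$-linearity.
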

\begin{proof} Writing $\nabla_{\Omega^1_A}(\xi)=\eta\tens\kappa$, we find
\begin{align} \label{yoeya}
\doublenabla(\sigma_M)&(1\tens\xi)=
\nabla_{\Omega^1(\R)\tens M}\big(\sigma_M(1\tens\xi)\big)
-(\id\tens\sigma_M)\nabla_{M\tens {\Omega^1_A}}(1\tens\xi) \cr
&= \nabla_{\Omega^1(\R)\tens M}(\extd t\tens K(\xi)) -  (\id\tens\sigma_M)\big( \extd t\tens b\tens\xi+\sigma_M(1\tens\eta)\tens\kappa\big)   \cr
&= \sigma_{\Omega^1(\R)}(\extd t\tens\extd t)\tens \big(
bK(\xi) + \tfrac{\partial K(\xi)}{\partial t} +K(\extd K(\xi))\big) \cr
&\qquad - \extd t\tens\extd t\tens \big(K(b\xi)+K(K(\eta)\kappa)\big)
\end{align}
so 
 we get $\doublenabla(\sigma_M)=0$ reducing to
\begin{align} \label{procw}
bK(\xi) + \tfrac{\partial K(\xi)}{\partial t} +K(\extd K(\xi)) = K(b\xi)+K(K\tens\id)\nabla_{\Omega^1_A}(\xi)\ .
\end{align}
If $\doublenabla(\sigma_M)=0$ from Proposition~\ref{bcuow} we also have $\sigma_{{\Omega^1(\R)}\tens M}(\sigma_M\tens\id)=(\id\tens\sigma_M)\sigma_{M\tens \Omega^1_A}$, which is just the braid relation 
\begin{align} \label{outfd}
(\sigma_{\Omega^1(\R)}\tens\id) (\id\tens\sigma_M)  (\sigma_M\tens\id)    =   (\id\tens\sigma_M)    (\sigma_M\tens\id)  (\id\tens\sigma_{\Omega^1_A})
\end{align}
and this gives the equation $K(K\tens\id)\sigma_{\Omega^1_A}=K(K\tens\id)$. Using this and the invertibility of $\sigma_{\Omega^1_A}$ in (\ref{procw}) gives
(\ref{gedef}). To check the right multiplication property for (\ref{gedef}) we look at
\begin{align*}
K(&K\tens\id)\sigma_{\Omega^1_A}^{-1}\nabla_{\Omega^1_A}(\xi\,a) - K(\extd K(\xi\,a))\\  &= K(K\tens\id)\sigma_{\Omega^1_A}^{-1}\nabla_{\Omega^1_A}(\xi)a
+K(K\tens\id)(\xi\tens\extd a)  - K(\extd( K(\xi)a)) \\
 &= K(K\tens\id)\sigma_{\Omega^1_A}^{-1}\nabla_{\Omega^1_A}(\xi)a
+K(K\tens\id)(\xi\tens\extd a)  - K(\extd K(\xi))a-K( K(\xi)\,\extd a)\ .\qquad \square
\end{align*}
\end{proof}

\medskip
As (\ref{gedef}) is in the form of a time evolution for $K$ as a function of $t$, it is natural to specify $K$ at time zero and then try to solve (\ref{gedef}) for some time interval including $t=0$. We must then assume that $K(K\tens\id)(\sigma_{\Omega^1_A}-\id)=0$ at time zero. But does this remain true under the time evolution given by (\ref{gedef})?

\begin{cor} \label{ldtrax} If we set $G=K(K\tens\id)(\sigma_{\Omega^1_A}-\id):\Omega^1_A\tens_A \Omega^1_A\to A$ then
\begin{align*}
& \tfrac{\partial G}{\partial t}  =GL_b-L_bG+ K(K\tens\id)(K\tens\id\tens\id)\doublenabla(\sigma_{\Omega^1_A})
+ G(K\tens\id\tens\id)\nabla_{\Omega^1_A\tens \Omega^1_A}  - K\extd G
\cr
&\qquad - K(G\sigma_{\Omega^1_A}^{-1}\tens\id)\nabla_{\Omega^1_A\tens \Omega^1_A}(\sigma_{\Omega^1_A}-\id) - G\sigma_{\Omega^1_A}^{-1} \nabla_{\Omega^1_A}(K\tens\id)  (\sigma_{\Omega^1_A}-\id)
\end{align*}
where $L_b$ is the left multiply by $b$ operation, $\nabla_{\Omega^1_A\tens \Omega^1_A}$ is the tensor product connection on 
$\Omega^1_A\tens_A\Omega^1_A$ and
$\doublenabla(\sigma_{\Omega^1_A})=\nabla_{\Omega^1_A\tens \Omega^1_A}\sigma_{\Omega^1_A}-(\id\tens \sigma_{\Omega^1_A})\nabla_{\Omega^1_A\tens \Omega^1_A}$. 
\end{cor}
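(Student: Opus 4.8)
The plan is to establish the identity by a single direct differentiation, arranged so that the question posed just before the statement becomes transparent. Since $\sigma_{\Omega^1_A}$ and $\id$ are built only from the time-independent calculus on $A$, the only $t$-dependence in $G=K(K\tens\id)(\sigma_{\Omega^1_A}-\id)$ sits in the factor $K(K\tens\id)$, so
\[
\frac{\partial G}{\partial t} = \frac{\partial}{\partial t}\big(K(K\tens\id)\big)\circ(\sigma_{\Omega^1_A}-\id)\ .
\]
Writing $K(K\tens\id)=K\circ(K\tens\id)$ and applying the Leibniz rule in $t$ to the two occurrences of $K$ --- crucially remembering that the outer $K$ is evaluated on the $t$-dependent one-form $(K\tens\id)(\xi\tens\eta)=K(\xi)\,\eta$, so its derivative contributes a term as well --- gives $\frac{\partial}{\partial t}K(K\tens\id)=\dot K(K\tens\id)+K(\dot K\tens\id)$, where $\dot K=\frac{\partial K}{\partial t}$.

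Next I would substitute the velocity equation (\ref{gedef}) for each copy of $\dot K$, reading it as the operator identity $\dot K = K L_b - L_b K + K(K\tens\id)\sigma_{\Omega^1_A}^{-1}\nabla_{\Omega^1_A} - K\,\extd\,K$ on $\Omega^1_A$ (the last term meaning $\xi\mapsto K(\extd K(\xi))$). Expanding produces eight terms. The pair $K L_b(K\tens\id)$ (from $K L_b$ in the outer slot) and $-K(L_b K\tens\id)$ (from $-L_b K$ in the inner slot) cancel identically. The two surviving $b$-terms $-L_b K(K\tens\id)$ and $K(K\tens\id)L_b$, once composed with $(\sigma_{\Omega^1_A}-\id)$, combine into $GL_b-L_bG$; here I would use that $\sigma_{\Omega^1_A}$ is a left $A$-module map, so $\sigma_{\Omega^1_A}L_b=L_b\sigma_{\Omega^1_A}$, together with the purely formal restatement $K(K\tens\id)\sigma_{\Omega^1_A}=K(K\tens\id)+G$ of the definition of $G$. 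The outer-slot term $-K\,\extd\,K(K\tens\id)$, composed with $(\sigma_{\Omega^1_A}-\id)$, collapses to $-K\extd G$ because $\extd\circ\big(K(K\tens\id)\big)\circ(\sigma_{\Omega^1_A}-\id)=\extd\circ G$.

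The heart of the proof, and the step I expect to be the main obstacle, is the reorganisation of the three remaining terms, namely $K(K\tens\id)\sigma_{\Omega^1_A}^{-1}\nabla_{\Omega^1_A}(K\tens\id)$ and $K\big(K(K\tens\id)\sigma_{\Omega^1_A}^{-1}\nabla_{\Omega^1_A}\tens\id\big)$ and $-K(K\tens\id)(\extd K\tens\id)$, each composed on the right with $(\sigma_{\Omega^1_A}-\id)$. Here I would expand $\nabla_{\Omega^1_A}(K(\xi)\,\eta)=\extd(K(\xi))\tens\eta+K(\xi)\,\nabla_{\Omega^1_A}(\eta)$ by the left Leibniz rule, recognise the combination $\nabla_{\Omega^1_A}(K\tens\id)$, and assemble both the tensor-product connection $\nabla_{\Omega^1_A\tens\Omega^1_A}$ of (\ref{tpuor}) and the map $\doublenabla(\sigma_{\Omega^1_A})$ of (\ref{yerohd}). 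Repeatedly inserting $\sigma_{\Omega^1_A}^{-1}\sigma_{\Omega^1_A}=\id$ and using the companion restatement $K(K\tens\id)\sigma_{\Omega^1_A}^{-1}=K(K\tens\id)-G\sigma_{\Omega^1_A}^{-1}$ then converts the leading $K(K\tens\id)$ prefactors into the single term $K(K\tens\id)(K\tens\id\tens\id)\doublenabla(\sigma_{\Omega^1_A})$ --- the unique term carrying no explicit factor of $G$ --- plus the four $G$-proportional corrections recorded in the statement. The delicate bookkeeping is tracking which tensor slot each $K$, $\sigma_{\Omega^1_A}^{-1}$ and $\nabla_{\Omega^1_A}$ acts on, and keeping the right $A$-module property of $K$ straight while commuting the scalars $K(\xi)\in A$ past forms; the definition $G=K(K\tens\id)(\sigma_{\Omega^1_A}-\id)$ has to be back-substituted at several places to factor out $G$.

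Finally I would note the payoff that motivates arranging the identity this way. Every term on the right except $K(K\tens\id)(K\tens\id\tens\id)\doublenabla(\sigma_{\Omega^1_A})$ carries an explicit factor of $G$, so if the connection on $\Omega^1_A$ is chosen with $\doublenabla(\sigma_{\Omega^1_A})=0$ then $\frac{\partial G}{\partial t}$ is a linear, first-order expression in $G$; uniqueness of solutions for this linear system shows that $G=0$ at $t=0$ forces $G=0$ for all $t$, so the constraint $K(K\tens\id)(\sigma_{\Omega^1_A}-\id)=0$ required in Proposition~\ref{prido99} is indeed preserved by the evolution (\ref{gedef}).
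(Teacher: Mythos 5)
Your proposal is correct and follows essentially the same route as the paper: differentiate $K(K\tens\id)$ by the Leibniz rule, substitute (\ref{gedef}) into both slots, cancel the cross $b$-terms, and reassemble the remaining $\nabla_{\Omega^1_A}$- and $\extd K$-terms into $\doublenabla(\sigma_{\Omega^1_A})$, the tensor-product connection, and $G$-proportional corrections using $K(K\tens\id)\sigma_{\Omega^1_A}=K(K\tens\id)+G$. The only (immaterial) difference is ordering: the paper first derives the operator identity (\ref{upreus}) for $\partial S/\partial t$ with $S=K(K\tens\id)$ and composes with $(\sigma_{\Omega^1_A}-\id)$ at the very end, whereas you compose with $(\sigma_{\Omega^1_A}-\id)$ from the outset.
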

\begin{proof} 
From (\ref{gedef}), for $\xi,\mu\in\Omega^1_A$,
\begin{align*}
& \tfrac{\partial K(K\tens \id)(\xi\tens\mu)}{\partial t}  = K\big(\big(K(b\xi) - bK(\xi) +K(K\tens\id)\sigma_{\Omega^1_A}^{-1}\nabla_{\Omega^1_A} (\xi) - K(\extd K(\xi))\big)\mu\big)\\
&\qquad + K(bK(\xi)\mu) - bK(K(\xi)\mu) +K(K\tens\id)\sigma_{\Omega^1_A}^{-1}\nabla_{\Omega^1_A} (K(\xi)\mu) - K(\extd K(K(\xi)\mu)) \\
&\quad =  K\big(K(b\xi)\mu\big) +  K(K\tens\id)(K\tens\id\tens\id)\big(\sigma_{\Omega^1_A}^{-1}\nabla_{\Omega^1_A} (\xi)\tens \mu\big) 
- K(K\tens\id)\big( \extd K(\xi)\tens \mu\big)
\\
&\qquad  - bK(K(\xi)\mu) +K(K\tens\id)\sigma_{\Omega^1_A}^{-1}\nabla_{\Omega^1_A} (K(\xi)\mu) - K(\extd K(K(\xi)\mu)) \\
&\quad =  K\big(K(b\xi)\mu\big)   - bK(K(\xi)\mu) +  K(K\tens\id)(K\tens\id\tens\id)\big(\sigma_{\Omega^1_A}^{-1}\nabla_{\Omega^1_A} (\xi)\tens \mu
+\xi\tens \sigma_{\Omega^1_A}^{-1}\nabla_{\Omega^1_A} (\mu)\big) 
\\
&\qquad + K(K\tens\id)(\sigma_{\Omega^1_A}^{-1}-\id)\big( \extd K(\xi)\tens \mu\big)   - K\extd (K(K\tens\id)(\xi\tens\mu)) \ .
\end{align*}
If we put $S=K(K\tens\id)$ then we find
\begin{align*}
& \tfrac{\partial S(\xi\tens\mu)}{\partial t}  =S(b\xi\tens\mu)-bS(\xi\tens\mu)+ K(S\sigma_{\Omega^1_A}^{-1}\tens\id)\nabla_{\Omega^1_A\tens \Omega^1_A}(\xi\tens\mu)\\
&\qquad + S(\sigma_{\Omega^1_A}^{-1}-\id)\big( \extd K(\xi)\tens \mu+K(\xi)\nabla_{\Omega^1_A} (\mu)\big)   - K\extd S(\xi\tens\mu) \\
&\quad =S(b\xi\tens\mu)-bS(\xi\tens\mu)+ K(S\tens\id)\nabla_{\Omega^1_A\tens \Omega^1_A}(\xi\tens\mu) 
\\
&\qquad + K(S(\sigma_{\Omega^1_A}^{-1}-\id)\tens\id)\nabla_{\Omega^1_A\tens \Omega^1_A}(\xi\tens\mu) + S(\sigma_{\Omega^1_A}^{-1}-\id) \nabla_{\Omega^1_A} (K\tens\id)(\xi\tens\mu)   - K\extd S(\xi\tens\mu) 
\end{align*}
or in terms of operators, 
\begin{align} \label{upreus}
& \tfrac{\partial S}{\partial t}  =SL_b-L_bS+ S(K\tens\id\tens\id)\nabla_{\Omega^1_A\tens \Omega^1_A}  - K\extd S
\cr
&\qquad - K(S(\sigma_{\Omega^1_A}-\id)\sigma_{\Omega^1_A}^{-1}\tens\id)\nabla_{\Omega^1_A\tens \Omega^1_A} - S(\sigma_{\Omega^1_A}-\id)\sigma_{\Omega^1_A}^{-1} \nabla_{\Omega^1_A} (K\tens\id)  
\end{align}
and then compose this with $(\sigma_{\Omega^1_A}-\id)$.\hfill $\square$
\end{proof}

\medskip
In the cases where $\doublenabla(\sigma_{\Omega^1_A})=0$ (such as classical geometry) or where we can otherwise ensure the vanishing of 
$K(K\tens\id)(K\tens\id\tens\id)\doublenabla(\sigma_{\Omega^1_A})$ we see that $G=0$ is a solution of equation (\ref{gedef}) on the interval where $K$ is defined. Thus, if we have uniqueness of solution of the equation we would have $K(K\tens\id)(\sigma_{\Omega^1_A}-\id)=0$. 
Now we need to justify why the equation $\doublenabla(\sigma_M)=0$ in Proposition~\ref{prido99} in the classical case has anything to do with geodesics.

\begin{example} \label{classcomp}
We consider the classical case with algebra $A=C^\infty(X)$ for a $n$-dimensional manifold $X$. 
The equation  (\ref{gedef}) for the time dependent vector field $K$
 on $X$ becomes
\begin{align} \label{yodfy}
 \tfrac{\partial K^i}{\partial t}  + K^s\, K^i{}_{,s}  &+ K^k\,K^j \,\Gamma^i{}_{jk}
 =0
\end{align}
where $\Gamma^i{}_{jk}$ are the Christoffel symbols for the connection on $X$. 
Now, suppose that we start a point at $x(0)\in X$ for $t=0$ and move it according to the vector field $\tfrac{\extd x}{\extd t}=K(x)$. As the point moves, the
`convective derivative' from fluid mechanics gives  $\tfrac{\extd K^i(x)}{\extd t}= \tfrac{\partial K^i}{\partial t} + K^s\, K^i{}_{,s} $ and so 
(\ref{yodfy}) becomes $\tfrac{\extd K^i(x)}{\extd t}+ K^k\,K^j \,\Gamma^i{}_{jk}=0$, which is the usual equation for the velocity being parallel transported. Thus the vector field approach here is actually giving the velocity field for particles obeying geodesic motion starting at arbitrary points. Note that $\sigma_A$ is just the flip map and the extra condition $K(K\tens\id)(\sigma_A-\id)=0$ is automatically satisfied. \hfill$\diamond$
\end{example}

\section{Connections and the KSGNS construction} \label{plurg}
Recall that for a $\C^\infty(\R)$-$A$-bimodule $M$ with a positive inner product $\<\,,\>:M\tens_A \overline{M}\to \C^\infty(\R)$, given a $m\in M$ we have a positive map $\psi:A\to C^\infty(\R)$ given by $\psi(a)=\<m.a,\overline{m}\>$. But what is $m$? In Proposition~\ref{propsigma} where $M=C^\infty(\R)_{\tilde\gamma}$ we had $m(t)=1$, so given the usual $t$-derivative on $M$ we had
$\nabla_M(m)=0$, and it will turn out that this is a reasonable condition to assume in the noncommutative case.

\begin{example} \label{classar} We continue from Example~\ref{classcomp} and check whether
the equation $\nabla_M(m)=0$ corresponds to the classical situation. 
Take a positive function on $C^\infty(\R^3)$ given by
\[
\phi(t)(f) =\int_{\R^3} |m(t)|^2\,f\, \extd^3 x
\]
for the usual Lebesgue measure on $\R^3$ and a time dependent rapidly decreasing function (or rather density) $m(t)\in L^2(\R^3,\C)$. 
We use the connection $\nabla_M(m)=m\,b+K(\extd m)+\frac{\partial m}{\partial t}$  from 
Proposition~\ref{prido}.
We suppose that $K$ is a real vector field which is constant in time, and that $b=0$. Then $K$ gives an action of 
 $(\R,+)$ on the manifold $X$ by the flow $F:\R\times X\to X$ given by $\frac{\partial F(t,x)}{\partial t} =K(F(t,x))$ (i.e.\ a tangent vector at $F(t,x)$). 
Now the equation $\nabla_M(m)=0$ is solved by setting $m(t)(x)=m(0)(F(-t,x))$. If at time $0$ we have $m$ concentrated at the point $x_0$ then at time $t$ it is concentrated at $x_t$ where $F(-t,x_t)=x_0$, i.e.\ $x_t=F(t,x_0)$. 
 \hfill $\diamond$
\end{example}

Now we have several problems:

\smallskip

1) The flow in Example~\ref{classar} does not preserve the normalisation of the positive function (i.e.\ its value on $1\in A$ is time dependent) in general for a vector field $K$ with non-zero divergence. 

2) The $b$  from Proposition~\ref{prido99} doesn't appear in Example~\ref{classcomp}, as $b$ cancels from (\ref{gedef}) as a classical vector field is a bimodule map. However, in general the $b$ terms will contribute to the velocity equation. But there is no equation for $\frac{\extd b}{\extd t}$, so how to find $b$? 

3) For a classical manifold it is obvious what a real vector field is, but in noncommutative geometry it is not at all obvious in general. The reader should recall that we cannot immediately define a real vector field as the `obvious definition' $K=K^*$ as $K^*(\xi)=(K(\xi^*))^*$ swaps left and right vector fields.

\smallskip To consider these further, we need a definition:

\begin{definition} \label{quaty} For a $B$-valued inner product $\<\,,\>$ on a $B$-$A$-bimodule $M$, we say that a connection
$\nabla_M:M\to \Omega^1_B\tens_B M$ preserves the inner product if for all $n,m\in M$
\begin{align*} 
\extd \<n,\overline{m}\> =
(\id\tens\<\,,\>)(\nabla_M(n)\tens \overline{m}) + (\<\,,\>\tens\id)(n\tens \tilde\nabla_{\overline{M}}(
\overline{m})) \in\Omega^1_B
\end{align*}
where $\tilde\nabla_{\overline{M}}$ is the right connection  from Section~\ref{prelim}. 
\end{definition}

Now we apply this definition to the $\R^3$ case of Example~\ref{classar}.

\begin{example} \label{classar5} 
Begin with the inner product on time dependent elements of $ L^2(\R^3,\C)$ 
\[
\<k,\overline{c}\>=\int kc^*\,\extd^3 x\ .
\]
From Proposition~\ref{prido} the equation for $\nabla_M$ preserving the metric becomes
\begin{align*}
\frac{\extd}{\extd t}\int kc^*\,\extd^3 x &= \int\Big( (bk+\dot k+K^i\tfrac{\partial k}{\partial x^i})c^*+k(b^*c^*+\dot c^*+K^i{}^*\tfrac{\partial c^*}{\partial x^i})\Big)\,\extd^3 x
\end{align*}
and by using integration by parts we get, for all $c,k\in L^2(\R^3,\C)$ 
\begin{align*}
 \int kc^*(b+b^*-\tfrac{\partial K^i}{\partial x^i})\,\extd^3 x +  \int k \tfrac{\partial c^*}{\partial x^i} (K^i{}^*-K^i)\,\extd^3 x =0
 \ .
\end{align*}
This requires that $K^i{}^*=K^i$ (i.e.\ $K$ is a real vector field) and that 
 the real part of $b(t)$ is half the usual divergence of the vector field $K(t)$. The imaginary part of $b(t)$ is arbitrary -- this could be thought of as a gauge choice. \hfill$\diamond$
\end{example} 

Having considered a classical special case we now go to a more general case.

\begin{proposition}  \label{presip}
The connection on $M=C^\infty(\R)\tens A$ in Proposition~\ref{prido} preserves the inner product on $M$ if
for all $a\in A$ and $\xi\in\Omega^1_A$
\[
\<\big(ba+K(\extd a)+ab^*
\big),\overline{1}\>=0 =\<K(\xi^*)-K(\xi)^*,\overline{1}\>\ .
\]
\end{proposition}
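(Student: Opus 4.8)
The plan is to unwind Definition~\ref{quaty} for the explicit $\nabla_M$ and $\tilde\nabla_{\overline M}$ of Proposition~\ref{prido}, assume the two displayed conditions, and reduce the preservation identity to them by systematically moving the right-hand entry of $\<\,,\>$ onto $\overline 1$.

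First, since $\Omega^1(\R)$ is free of rank one on $\extd t$ and $(\extd t)^*=\extd t$, I would write $\nabla_M(m)=\extd t\tens\nabla_t(m)$ with $\nabla_t(m)=bm+\tfrac{\partial m}{\partial t}+K(\extd m)$, so that $\tilde\nabla_{\overline M}(\overline m)=\overline{\nabla_t m}\tens\extd t$, and the condition of Definition~\ref{quaty} collapses to the scalar Leibniz rule $\tfrac{\partial}{\partial t}\<n,\overline m\>=\<\nabla_t n,\overline m\>+\<n,\overline{\nabla_t m}\>$. Because $\<\,,\>_A$ in (\ref{yjkp}) is time independent, the operator $\tfrac{\partial}{\partial t}$ obeys this Leibniz rule on its own, so those summands cancel and it suffices to prove $\<bn+K(\extd n),\overline m\>+\<n,\overline{bm+K(\extd m)}\>=0$ for all $n,m\in M$. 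Both sides are $C^\infty(\R)$-sesquilinear (using the Leibniz rules, the bimodule property of $\<\,,\>$, and the centrality of $C^\infty(\R)$), so I may take $n=a$ and $m=a'$ with $a,a'\in A$ constant in time.

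Next I would rewrite every term using $\<m,\overline{m'}\>^*=\<m',\overline m\>$, the balancing $\<m.a,\overline{m'}\>=\<m,a.\overline{m'}\>$ over $A$ (with $a.\overline{m'}=\overline{m'.a^*}$), and $\overline{a'}=(a')^*.\overline 1$, so that the four contributions become $\<ba(a')^*+K((\extd a)(a')^*),\overline 1\>$, the term $\<ba'a^*,\overline 1\>^*=\<a(a')^*b^*,\overline 1\>$, and $\<K((\extd a')a^*),\overline 1\>^*$. The decisive move is on the last term: $\<m,\overline 1\>^*=\<1,\overline m\>$ turns it into $\<K((\extd a')a^*)^*,\overline 1\>$, and then the reality hypothesis $\<K(\xi^*)-K(\xi)^*,\overline 1\>=0$, taken with the specific choice $\xi=(\extd a')a^*$ and $\xi^*=a\,\extd((a')^*)$ (from the $*$-bimodule rule on $\Omega^1_A$ consistent with $(a\,\extd b)^*=\extd b^*.a^*$), rewrites it as $\<K(a\,\extd((a')^*)),\overline 1\>$.

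Finally, expanding $K((\extd a)(a')^*)$ by the Leibniz rule $(\extd a)(a')^*=\extd(a(a')^*)-a\,\extd((a')^*)$ and the right-module property of $K$ produces a term $-\<K(a\,\extd((a')^*)),\overline 1\>$ that cancels exactly the term just obtained from the reality condition. What survives is $\<b\tilde a+K(\extd\tilde a)+\tilde a\,b^*,\overline 1\>$ with $\tilde a=a(a')^*$, which vanishes by the first hypothesis applied to $\tilde a$. The main obstacle is purely bookkeeping: one must keep the antilinearity of $\overline{(\,\cdot\,)}$, the $*$ on $\Omega^1_A$, and the side on which each element of $A$ acts mutually consistent, and in particular invoke the reality condition at precisely the argument $\xi=(\extd a')a^*$ so that $\xi^*$ matches the Leibniz term that must be removed; any other grouping leaves an uncancelled $K$-term.
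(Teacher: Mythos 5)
Your proof is correct and follows essentially the same route as the paper's: both reduce the preservation identity to the algebraic condition $\<bac^*+K(\extd a.c^*)+ac^*b^*+K(\extd c.a^*)^*,\overline{1}\>=0$ over $A$, and both hinge on the Leibniz splitting $\extd a.c^*=\extd(ac^*)-a\,\extd(c^*)$ together with the reality condition applied at exactly $\xi=\extd c.a^*$. The only difference is presentational: the paper runs the computation from preservation toward the two conditions (the steps being reversible), while you assume the conditions and derive preservation, also making explicit the reduction of the time-derivative terms that the paper leaves implicit.
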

\noindent\textbf{Proof:}\quad The condition for preservation is, for $a,c\in A$,
\begin{eqnarray*}
0 &=& \extd t.\<   ba+K(\extd a) ,  \overline{c}\> + \<a,\overline{bc+K(\extd c)}\>.\extd t \cr
&=& \extd t.\<\big(bac^*+K(\extd a)c^*+ac^*b^*+aK(\extd c)^*
\big),\overline{1}\> \cr
&=& \extd t.\<\big(bac^*+K(\extd a.c^*)+ac^*b^*+K(\extd c.a^*)^*
\big),\overline{1}\> 
\end{eqnarray*}
and putting $c=1$ gives the first displayed equation. Using this with $ac^*$ instead of $a$ in the condition for preservation gives
\begin{eqnarray*}
0 = \<\big(K(\extd a.c^*)-K(\extd(ac^*))+K(\extd c.a^*)^*
\big),\overline{1}\> = \<\big(K(\extd c.a^*)^*-K( a.\extd c^*)
\big),\overline{1}\> \ .\quad\square
\end{eqnarray*}

\medskip We call the first of the displayed equations in Proposition~\ref{presip} the \textit{divergence condition} for $b$ and the second the \textit{reality condition} for $K$. This answers questions (2) and (3), for question (1) we have the following:

\begin{proposition} \label{orbg}
If $\nabla_M$ preserves the inner product as in Definition~\ref{quaty} and 
$\nabla_M(m)=0$ then 
the positive map $\phi(a)=\<ma,\overline{m}\>$ satisfies 
\[
\extd t.\tfrac{\extd}{\extd t} \phi(a)=(\id\tens\<\,,\>)(\sigma_M\tens\id)(m\tens \extd a\tens\overline{m}) 
\ .
\]
In particular
 $\frac{\extd}{\extd t}\phi(1)=0$, so if we begin at $t=0$ with a state on $A$ (normalised to be 1 at $1\in A$) then we have a state for all time.
\end{proposition}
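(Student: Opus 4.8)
The plan is to recognise that, since $\phi(a)\in C^\infty(\R)$ and the calculus on $\R$ is the usual one, the quantity $\extd t\cdot\tfrac{\extd}{\extd t}\phi(a)$ is nothing but the exterior derivative $\extd\phi(a)=\extd\<ma,\overline{m}\>$. So the entire statement reduces to computing the exterior derivative of the inner product, and the natural tool for that is the inner-product-preservation identity of Definition~\ref{quaty}.

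First I would apply that identity with $n=ma$, obtaining
\[
\extd\<ma,\overline{m}\>=(\id\tens\<\,,\>)(\nabla_M(ma)\tens\overline{m})+(\<\,,\>\tens\id)(ma\tens\tilde\nabla_{\overline{M}}(\overline{m}))\ .
\]
Next I would expand $\nabla_M(ma)$ via the relation $\nabla_M(m.a)=\nabla_M(m).a+\sigma_M(m\tens\extd a)$ of Definition~\ref{tentat}. The hypothesis $\nabla_M(m)=0$ annihilates the first summand, leaving $\nabla_M(ma)=\sigma_M(m\tens\extd a)$, so the first term above becomes exactly $(\id\tens\<\,,\>)(\sigma_M\tens\id)(m\tens\extd a\tens\overline{m})$, which is the claimed right-hand side.

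The remaining task is to show that the second term vanishes. Here I would invoke the construction of the conjugate connection $\tilde\nabla_{\overline{M}}$ recalled in Section~\ref{prelim}: if $\nabla_M(m)=\xi\tens n$ then $\tilde\nabla_{\overline{M}}(\overline{m})=\overline{n}\tens\xi^*$. Since $\nabla_M(m)=0$ we get $\tilde\nabla_{\overline{M}}(\overline{m})=0$ immediately, so the second summand drops out and the displayed formula is established. I do not expect a genuine obstacle at any point; the only care required is in tracking the bar/conjugate bimodule structure so that the inner product and the connections compose on the correct sides. Note that both hypotheses are genuinely needed: $\nabla_M(m)=0$ is used once to simplify the first term and once to kill the second.

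Finally, for the \emph{in particular} clause I would specialise to $a=1$. Since $A$ is unital we have $\extd 1=0$, hence $\sigma_M(m\tens\extd 1)=0$ and the right-hand side vanishes, giving $\tfrac{\extd}{\extd t}\phi(1)=0$. Thus $\phi(1)$ is constant in $t$. Combined with positivity of $\phi$, which is automatic from the KSGNS form $\phi(a)=\<ma,\overline{m}\>$ together with positivity of the inner product (one checks $\phi(a^*a)=\<ma^*,\overline{ma^*}\>\ge 0$ using the $A$-balancing of $\tens_A$), a normalisation $\phi(1)=1$ imposed at $t=0$ therefore persists for all time, so we retain a state throughout.
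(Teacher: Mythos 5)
Your proposal is correct and follows essentially the same route as the paper: apply the inner-product-preservation identity of Definition~\ref{quaty} with $n=m.a$, expand $\nabla_M(m.a)$ via the $\sigma_M$ relation in Definition~\ref{tentat}, and use $\nabla_M(m)=0$ to kill both the $\nabla_M(m).a$ term and (through the conjugate connection) the $\tilde\nabla_{\overline{M}}(\overline{m})$ term. The paper's proof is just a terser version of this, and your extra details (the vanishing of the conjugate-connection term, the $a=1$ specialisation, and the positivity check via $A$-balancing) are exactly the steps it leaves implicit.
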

\begin{proof} From Definition~\ref{quaty} in the case where $B=C^\infty(\R)$,
\begin{align*} 
\extd t.\tfrac{\extd}{\extd t} \<m.a,\overline{m}\> =
(\id\tens\<\,,\>)(\nabla_M(m.a)\tens \overline{m}) + (\<\,,\>\tens\id)(m.a \tens \tilde\nabla_{\overline{M}}(
\overline{m}))
\end{align*}
and then use the definition of $\sigma_M$ in 
Definition~\ref{tentat}. \hfill $\square$
\end{proof} 

\medskip 
Classically the velocity $V$ of a path at a particular time is a vector at a single point. More generally, for a quantum path we expect to have to average over points to get a numerical value. Thus we consider $V$ evaluated al $\xi\in\Omega^1_A$ to be
$\<m.\xi,\overline{m}\>$, where we use $\sigma_M$ to rearrange this formula to make sense, giving
\[
V(\xi)=(\id\tens\<\,,\>)(\sigma_M\tens\id)(m\tens \xi\tens\overline{m}) \in C^\infty(\R)\ .
\]
This formula can be justified by 
Proposition~\ref{orbg}, whose result can then be written as $\tfrac{\extd}{\extd t} \phi(a)=V(\extd a)$. 
The obvious next question is if reality of vector fields is preserved by the time evolution given by (\ref{gedef}).
As in Corollary \ref{ldtrax} the next result is phrased to avoid any assumptions of uniqueness of solutions.

\begin{proposition}Suppose that
\[
\sigma_{\Omega^1_A}^{-1}\nabla_{\Omega^1_A}(\xi^*)=\dag\,\nabla_{\Omega^1_A}(\xi)
\]
where $\dag$ is flip on tensor factors composed with $*\tens *$, which is the statement that the connection on $\Omega^1_A$ preserves the $*$-operation \cite{BMriem}. Also
suppose that $b$ and $K$ satisfy the equations given in Proposition~\ref{prido99} as a consequence of $\doublenabla(\sigma_M)=0$, and that at a given time $b$ and $K$ satisfy the conditions in the statement of Proposition~\ref{presip}. Then at the given time
\[
\tfrac{\extd}{\extd t}\<K(\xi^*)-K(\xi)^*,\overline{1}\>=0\ .
\]
\end{proposition}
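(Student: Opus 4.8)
The plan is to reduce the claim to the statement that the time-derivative $\partial_t K$, which by (\ref{gedef}) is itself a (time-dependent) vector field determined by $b$ and $K$, again satisfies the reality condition of Proposition~\ref{presip} at the given time. Write $\tau(a):=\<a,\overline{1}\>$ for the induced functional on $A$; it obeys $\tau(a)^*=\tau(a^*)$ because of the $*$-property of the inner product. Write $R(\xi):=K(\xi^*)-K(\xi)^*$ for the reality defect. In this notation the hypotheses at the given time are the reality condition $\tau(K(\omega^*))=\tau(K(\omega)^*)$ for all $\omega\in\Omega^1_A$ and the divergence condition $\tau(ba+K(\extd a)+ab^*)=0$ for all $a\in A$. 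Since the inner product is time-independent I would first move the derivative inside, $\tfrac{\extd}{\extd t}\tau(R(\xi))=\tau(\partial_t R(\xi))$, and then substitute (\ref{gedef}) for $\partial_t K(\xi^*)$ and for $(\partial_t K(\xi))^*$, splitting $\partial_t R(\xi)$ into the reality defects $\rho_1,\dots,\rho_4$ of the four terms $K(b\,\cdot)$, $-b\,K(\cdot)$, $K(K\tens\id)\sigma_{\Omega^1_A}^{-1}\nabla_{\Omega^1_A}(\cdot)$ and $-K(\extd K(\cdot))$. Note that $b$ is never differentiated, so the absence of an evolution equation for $b$ is no obstacle and everything is evaluated at the single given time.

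A useful preliminary is to record two consequences of the reality condition obtained by feeding it right-multiplied arguments and using that $K$ is a right module map: applying it to $\omega=\zeta\,c^*$ gives $\tau(K(c\,\zeta^*))=\tau(c\,K(\zeta)^*)$, and applying it to $\omega=c\,\zeta$ gives $\tau(K(\zeta^*)\,c^*)=\tau(K(c\,\zeta)^*)$, for all $c\in A$ and $\zeta\in\Omega^1_A$. Next I would treat the first, second and fourth terms together. For $\rho_4$, the identity $(\extd K(\xi))^*=\extd(K(\xi)^*)$ together with the reality condition turns $\tau(\rho_4)$ into $-\tau(K(\extd R(\xi)))$, and the divergence condition applied to $a=R(\xi)$ rewrites this as $\tau(bR(\xi))+\tau(R(\xi)b^*)$. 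Expanding $\tau(\rho_1)+\tau(\rho_2)$ and repeatedly using the two displayed consequences of the reality condition, all $b$-dependent terms match and one finds $\tau(\rho_1)+\tau(\rho_2)=-\tau(bR(\xi))-\tau(R(\xi)b^*)$, which exactly cancels $\tau(\rho_4)$.

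It then remains to show $\tau(\rho_3)=0$, and this is where the remaining constraints enter and is the main obstacle. Writing $S=K(K\tens\id)$, the relation $S\,\sigma_{\Omega^1_A}=S$ from Proposition~\ref{prido99} (a consequence of $\doublenabla(\sigma_M)=0$) gives $S\,\sigma_{\Omega^1_A}^{-1}=S$, so the third term simplifies to $S\,\nabla_{\Omega^1_A}(\cdot)$, and in particular $T_3(\xi)=S\,\nabla_{\Omega^1_A}(\xi)$. For the conjugate side I would instead feed the $*$-compatibility hypothesis $\sigma_{\Omega^1_A}^{-1}\nabla_{\Omega^1_A}(\xi^*)=\dag\,\nabla_{\Omega^1_A}(\xi)$ directly into the third term, giving $T_3(\xi^*)=S\,\dag\,\nabla_{\Omega^1_A}(\xi)$. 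Writing $\nabla_{\Omega^1_A}(\xi)=\sum_j\gamma_j\tens\delta_j$, both $\tau(T_3(\xi^*))$ and $\tau(T_3(\xi)^*)$ reduce to $\sum_j\tau\big(K(\delta_j^*)\,K(\gamma_j)^*\big)$ — the first by the consequence $\tau(K(c\,\zeta^*))=\tau(c\,K(\zeta)^*)$ of the reality condition, the second by the reality condition together with $K$ being a right module map — so they agree and $\tau(\rho_3)=0$. The delicate point throughout, and the reason the braid relation $S\,\sigma_{\Omega^1_A}=S$ is indispensable, is that only the scalar identities $\tau(K(\omega^*))=\tau(K(\omega)^*)$ are available rather than the operator identity $K(\omega^*)=K(\omega)^*$: every conjugate must be moved past $K$ under $\tau$ with carefully chosen arguments, and without $S\,\sigma_{\Omega^1_A}=S$ the two halves of $\rho_3$ would refer to $\sigma_{\Omega^1_A}^{-1}\nabla_{\Omega^1_A}(\xi)$ and $\nabla_{\Omega^1_A}(\xi)$ respectively and would fail to match.
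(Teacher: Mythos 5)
Your proof is correct and is essentially the paper's own argument: both differentiate under $\<\,\cdot\,,\overline{1}\>$, substitute (\ref{gedef}), treat the $\nabla_{\Omega^1_A}$ term via the $*$-compatibility hypothesis, the braid-relation consequence $K(K\tens\id)\sigma_{\Omega^1_A}=K(K\tens\id)$ and the reality condition (your common value $\tau\big(K(\delta_j^*)K(\gamma_j)^*\big)$ is exactly the midpoint of the paper's chain of equalities), and dispose of the remaining $b$ and $\extd K$ terms using the divergence and reality conditions together with the right-module property of $K$. The only difference is bookkeeping in that last step: the paper applies the divergence condition twice (at $a=K(\xi^*)$ and at $a=K(\xi)$) to pass from one side to the other in a single chain, while you apply it once at $a=R(\xi)$ after isolating the defect of the $K(\extd K(\cdot))$ term and cancel it against the defects of the $b$ terms; these are trivially equivalent rearrangements of the same identities.
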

\begin{proof} First note that for $c\in C^\infty(\R)\tens A$ we have
$
\<c^*,\overline{1}\> = \<1,\overline{c^*}\>^*=\<c,\overline{1}\>^*
$.
From Proposition~\ref{presip},
\begin{align*}
&\<K(b\xi^*) - bK(\xi^*)  - K(\extd K(\xi^*)),\overline{1}\> = 
\<K(b\xi^*) +K(\xi^*)b^*  ,\overline{1}\> \cr
&\quad = \<K((\xi b^*)^*) +K((b\xi)^*)  ,\overline{1}\> = \<\big(K(\xi b^*) +K(b\xi) \big)^* ,\overline{1}\> \cr
& = \<K(\xi) b^* +K(b\xi)  ,\overline{1}\>^*
= \< K(b\xi) - bK(\xi)  - K(\extd K(\xi))  ,\overline{1}\>^* \ .
\end{align*}
In remains to deal with the $\nabla_{\Omega^1_A}$ term in (\ref{gedef}). Setting $\nabla_{\Omega^1_A}(\xi)=\eta\tens\kappa$ we get
\begin{align*}
&\<K(K\tens\id)\sigma_{\Omega^1_A}^{-1}\nabla_{\Omega^1_A}(\xi^*) ,\overline{1}\> = \< K(K(\kappa^*)\eta^*),\overline{1}\>
= \< K(\eta K(\kappa^*)^*)^*,\overline{1}\> 
 = \< K(\kappa^*)K(\eta )^*,\overline{1}\> \cr
 &\quad = \< K(\eta )K(\kappa^*)^*,\overline{1}\>^*=\< K(\kappa^*K(\eta )^*)^*,\overline{1}\>^*  = \< K(K(\eta )\kappa),\overline{1}\>^* \cr
&\quad =
\<  K(K\tens\id)\nabla_{\Omega^1_A}(\xi),\overline{1}\>^* = \<  K(K\tens\id)\sigma_{\Omega^1_A}^{-1}\nabla_{\Omega^1_A}(\xi),\overline{1}\>^*\ .
\qquad\qquad \square
\end{align*}
\end{proof}

\section{A $C(\Z_3)$ example} \label{z3exsect}
 Take the algebra $A=\C(\Z_n)$ of functions $f:\Z_n\to \C$ with calculus as in Example~\ref{zncalcex}.
 For the inner product we use
\[
\<a,\overline{c}\>=\int ac^*
\]
where $\int a$ is summation of $a(i)$ over $i\in\Z_n$. For the time dependent vector field $K$ we set
$K(e_{\pm 1})=K_\pm\in A$.
From Proposition~\ref{presip}
 preserving the inner product gives the reality condition, for all $a\in A$
\[
0=\int\big(K(a\,e_{\pm 1})^*-K((a\,e_{\pm 1})^*)\big)
=\int \big( K_\pm^* R_{\mp1}(a^*)+K_{\mp}a^*\big)
=\int \big( R_{\pm1}(K_\pm^*) +K_{\mp}\big)a^*
\]
which gives $K_-=-R_{1}(K_+^*)$. The other condition from  Proposition~\ref{presip} is the divergence condition, and for this we calculate
\[
K(\extd c)=K\big(e_{+1}(c-R_{-1}(c))+e_{-1}(c-R_{+1}(c))\big)
=K_+(c-R_{-1}(c))+ K_-(c-R_{+1}(c))\ ,
\]
so the divergence condition is
\begin{align*}
0&= \int\big(   bc+cb^* +K_+(c-R_{-1}(c))+ K_-(c-R_{+1}(c))  \big) \cr
&= \int c \big(  b+b^* + K_+   -  R_{+1}(K_+)+ K_-  - R_{-1}(K_-)\big)
\end{align*}
so we require
\[
0= b+b^* + K_+   -  R_{+1}(K_+)+ K_-  - R_{-1}(K_-)\ ,
\]
and by the reality condition $K_-=-R_{+1}(K_+^*)$ we can set
\[
b+K_++K_-=\tfrac12(K_+   +  R_{+1}(K_+) + K_-  + R_{-1}(K_-))\ .
\]
 
 We set $\nabla_A(e_{\pm 1})=0$, which gives  $\sigma_A(e_a\tens e_b)=e_b\tens e_a$. 
 Then $\doublenabla(\sigma_M)=0$ if and only if both $K(K\tens\id)\sigma_{\Omega^1_A}=K(K\tens\id)$ and (\ref{gedef}) holds, which in this case is
\begin{align}\label{gedeft}
 \tfrac{\partial K(e_{\pm 1})}{\partial t}  &= K(be_{\pm 1}) - b\, K(e_{\pm 1})  - K(\extd K(e_{\pm 1})) = K(e_{\pm 1}R_{\mp1}(b)) - b\, K_{\pm} -  K(\extd K_\pm) \cr
  &= K_\pm R_{\mp1}(b) - b\,  K_\pm   -K\big(e_{+1}(K_\pm-R_{-1}(K_\pm))+e_{-1}(K_\pm-R_{+1}(K_\pm))  \big) \cr
  &= K_\pm R_{\mp1}(b) - b\,  K_\pm  - K_+(K_\pm-R_{-1}(K_\pm)) - K_-(K_\pm-R_{+1}(K_\pm))\ .
\end{align}
(Recall that we only have to solve (\ref{gedef}) on the generators.)
As
\[
K(K\tens\id)(e_{+1}\tens e_{-1}) = K(K_+.e_{-1})=K(e_{-1}.R_{+1}(K_+))=K_-\,R_{+1}(K_+)
\]
and similarly for $e_{-1}\tens e_{+1}$,
 the $K(K\tens\id)(\sigma_A-\id)=0$ condition corresponds to $K_-\,R_{+1}(K_+)=K_+\,R_{-1}(K_-)$, i.e.\
 $K_-(i)\,K_+(i+1)=K_-(i-1)\,K_+(i)$. Then, using this
 \begin{align}\label{gedeft7}
 \tfrac{\partial K_+ }{\partial t}  
  &= K_+\big( R_{-1}(b) - b  - K_+  - K_-\big)
  + K_+R_{-1}(K_+) 
  + K_-R_{+1}(K_+) \cr 
    &= K_+\big( R_{-1}(b) - b  - K_+  - K_-\big)
  + K_+R_{-1}(K_+) 
  + K_+\,R_{-1}(K_-)\cr 
     &= K_+\big( R_{-1}(b + K_+  + K_-) - (b  + K_+  + K_-)\big)  \ ,   \cr
 \tfrac{\partial K_- }{\partial t}  
  &= K_-\big( R_{+1}(b) - b  - K_+  - K_-\big)
  + K_+R_{-1}(K_-) 
  + K_-R_{+1}(K_-) \cr
      &= K_-\big( R_{+1}(b+ K_+  + K_-) - (b  + K_+  + K_-) \big)\ .
\end{align}
Now we calculate
\begin{align*}
R_{+1}(b+K_++K_-)-(b+K_++K_-) 
&=
\tfrac12(  R_{+2}(K_+) + R_{+1}(K_-)  )
- \tfrac12(K_+     + R_{-1}(K_-)) 
\cr
R_{-1}(b+K_++K_-)-(b+K_++K_-) 
&=
\tfrac12(R_{-1}(K_+)     + R_{-2}(K_-))
- \tfrac12(  R_{+1}(K_+) + K_-  )
\end{align*}          
and these values can be substituted in (\ref{gedeft7}) to give differential equations for $K_\pm$. 
Using the reality condition and 
$K(K\tens\id)(\sigma_A-\id)=0$ we deduce that $|R_{+1}(K_+)|^2=|K_-|^2=|K_+|^2$, so
 $|K_\pm|$ is constant. 
 Using the connection in Proposition~\ref{prido} $\nabla_M(m)=0$ becomes
 \begin{align} \label{protr}
 \tfrac{\extd m}{\extd t} = -mb-K_+(m-R_-(m))-K_-(m-R_+(m))\ .
\end{align}
For the case $n=3$ we solve we solve these equations numerically with the
initial conditions for the vector field $K$ and $m\in M$ at $t=0$
\begin{align}\label{gedef009ulp} 
&K_-(1)= 1,\, K_-(2)= \mathrm{e}^{2\mathrm{i}},\, K_-(0)= \mathrm{e}^{3\mathrm{i}},\, K_+(1)= -\mathrm{e}^{-3\mathrm{i}},\, K_+(2)= -1, \,
K_+(0)= -\mathrm{e}^{-2\mathrm{i}}\cr
& m(0)=\tfrac{1}{\sqrt 2}\ ,\  m(1)=0\ ,\  m(2)=\tfrac{1}{\sqrt 2}\ .
\end{align}

 In Figure~\ref{highden35} (a) the three graphs represent the state evolving in time, with the lower graph being $\phi(\delta_0)$, the middle being
$\phi(\delta_0+\delta_1)$ and the upper $\phi(\delta_0+\delta_1+\delta_2)$. Then (b) shows the deviation from $K$ being real by plotting
the absolute values of $K_-(i)+K_+(i+1)^*$ for $i\in\{0,1,2\}$. Finally (c) plots $\phi(\delta_0+\delta_1+\delta_2)-1$,
 the deviation of the numerical solution from preserving the normalisation of the state. Neither of these properties are explicitly imposed on the numerical solution except at time $t=0$.

 \begin{figure}
    \centering
    \subfloat[the state $\phi(\delta_i)=|m(i)|^2$]{{\includegraphics[scale=0.33]{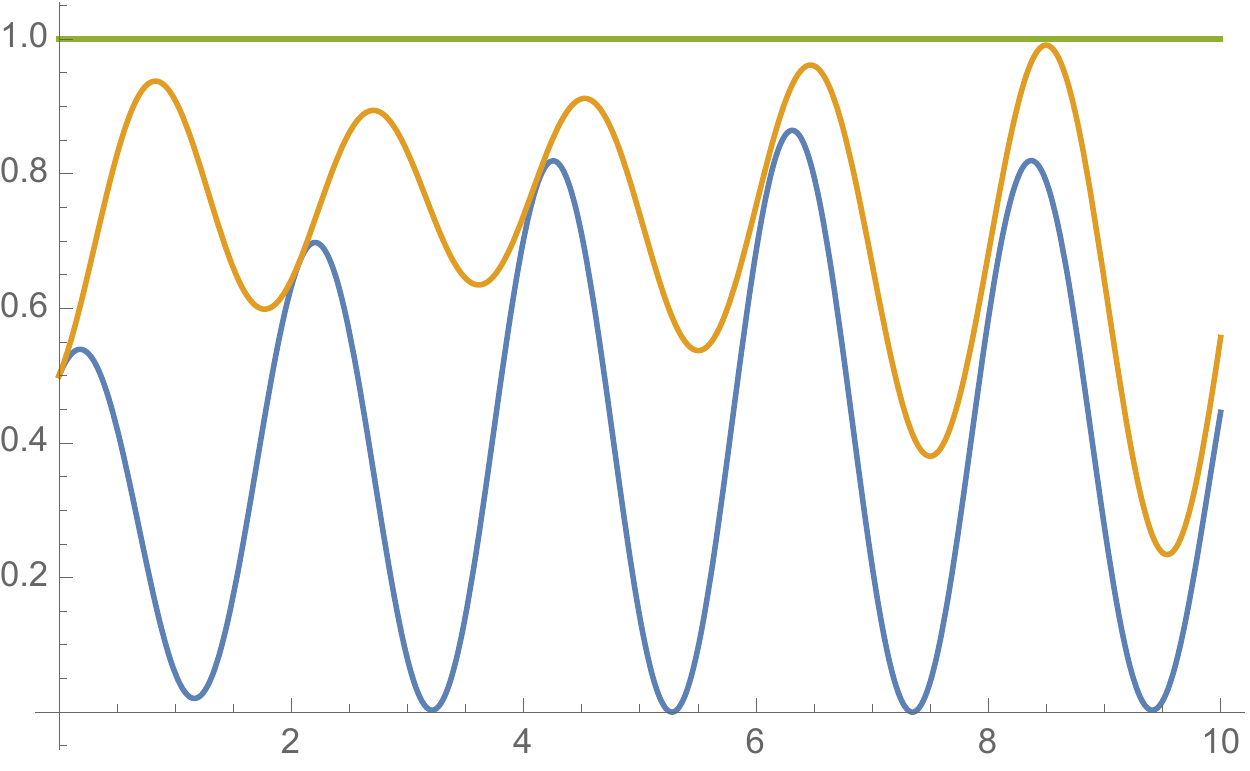}   } }%
    \,
    \subfloat[reality check]{{\includegraphics[scale=0.33]{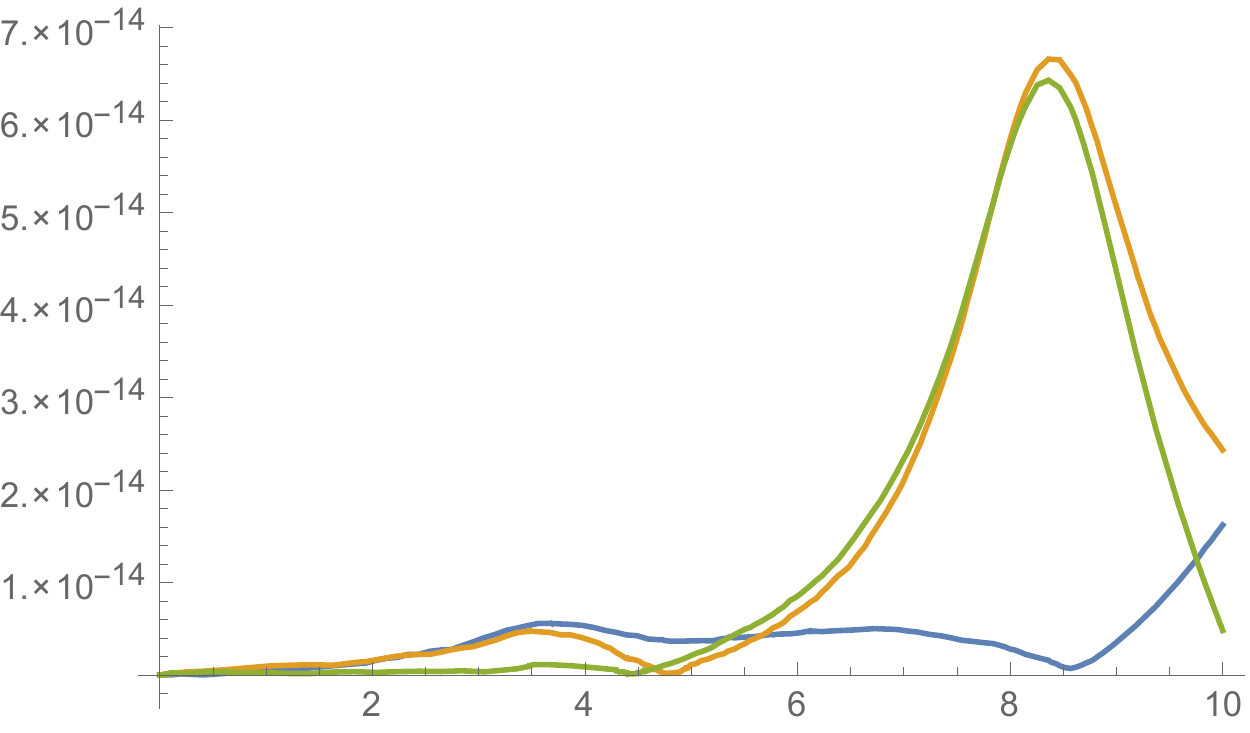} }}%
        \,
            \subfloat[normalisation check]{{\includegraphics[scale=0.33]{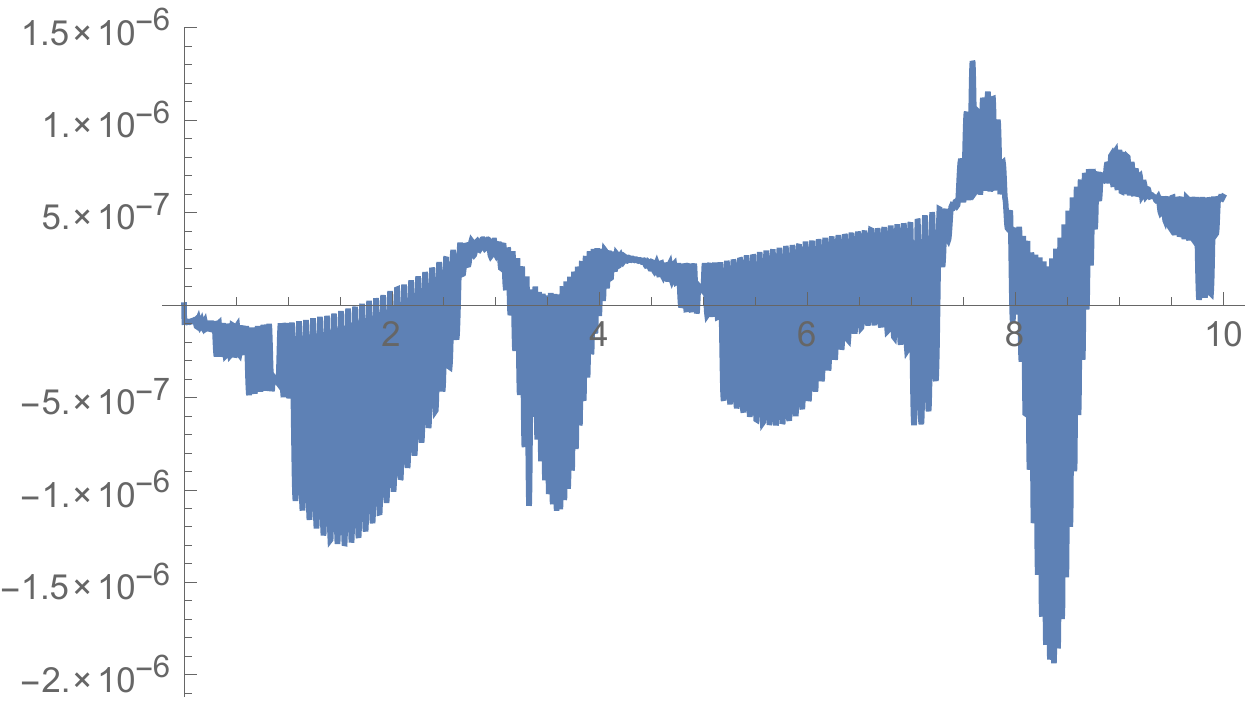} }}%
    \caption{Numerical solution to (\ref{gedeft7}) with initial conditions (\ref{gedef009ulp}), $t\in[0,10]$.
    }%
    \label{highden35}%
\end{figure}

\section{Two $M_2(\C)$ examples} \label{sectmatrix}

Take $A=M_2(\C)$ with calculus as described in Example~\ref{matcalcex}.  We give $\Omega^1$ a connection with
 $\nabla_{\Omega^1_A}(s^i)=0$, so $\sigma_A(s^i\tens s^j)=s^j\tens s^i$ and $\doublenabla(\sigma_A)=0$. 

\begin{example}
Take a $C^\infty(\R)$ valued inner product on 
$M=C^\infty(\R)\tens M_2(\C)$ by 
\[
\<c,\overline{e}\>=\mathrm{tr}(ce^*)\ .
\]
We first check the reality condition in Proposition~\ref{presip} for a vector field $K$, where
we set $K(s^i)=K_{i}\in M_2$,
\[
0=\<K((s^ia)^*)-K(s^ia)^*,\overline{1}\>=\<-K_{i'} a^*  - a^* K_i{}^*,\overline{1}\>
= - \mathrm{tr}(a^*(K_{i'} + K_i{}^*))
\]
for $i'\neq i$ and all $a\in M_2$, where we have used the centrality of $s^i$. 
We deduce that for real vector fields $K_1{}^*=-K_2$. 
For the divergence condition in Proposition~\ref{presip} we have
\begin{align*}
0 &= \<\big(ba+K(\extd a)+ab^*\big),\overline{1}\> =\<\big(ba+K(s^1\,[E_{12},a] + s^2\,[E_{21},a] )+ab^*\big),\overline{1}\>  \cr
&= \<\big(ba+K_1\,[E_{12},a] + K_2\,[E_{21},a] +ab^*\big),\overline{1}\> 
=\mathrm{tr}\big(a(b+b^*-[E_{12},K^1]-[E_{21},K_2]
)\big)
\end{align*}
and to solve this we set
\[
b=\tfrac12([E_{12},K_1]+[E_{21},K_2])
\]
which is Hermitian by the reality condition on $K$.

Then, by the comment after Corollary~\ref{ldtrax}, we would expect (subject to the uniqueness of solution) to have 
$K(K\tens\id)(\sigma_A-\id)=0$ on the interval if it is true for the initial condition. Now
\begin{align*}
K(K\tens\id)&(\sigma_A-\id)(s^1\tens s^2)=K(K\tens\id)(s^2\tens s^1-s^1\tens s^2) \\
&=K(K(s^2)s^1-K(s^1)s^2)=K(s^1K(s^2)-s^2K(s^1))=[K_1,K_2]
\end{align*}
so we require $[K_1,K_2]=0$, and by the reality condition this implies
$[K_1,K_1{}^*]=0$. 

Substituting the generators $s^i$ into the differential equation (\ref{gedef}) for $K$ gives
 \begin{align*}\label{gedef890}
 \tfrac{\partial K(s^i)}{\partial t}&= \tfrac{\partial K_i}{\partial t}   = K(bs^i) - bK(s^i)  - K(\extd K(s^i)) = -[b,K_i] -K(\extd K_i) \cr
 &=  [K_i,b]  -K_1\,[E_{12},K_i]  -K_2\,[E_{21},K_i]\ .
\end{align*}
Using $[K_1,K_2]=0$, this becomes
\[
 \tfrac{\partial K_i}{\partial t}  = 
 [K_i, (b+K_1E_{12}+K_2E_{21})]
\]
and using $b$ above we get
\begin{align} \label{achis}
 \tfrac{\partial K_i}{\partial t}  = \tfrac12\,
 [K_i, (E_{12}K_1+E_{21}K_2+K_1E_{12}+K_2E_{21})]\ .
\end{align}
From Proposition~\ref{prido} the equation $\nabla_M(m)=0$ gives
\[
\tfrac{\partial m}{\partial t} = -bm -K(\extd m)=-bm-K_1[E_{12},m]-K_2[E_{21},m]
\]
and using the value of $b$ above
\begin{align} \label{amm}
\tfrac{\partial m}{\partial t} =-\tfrac12([E_{12},K_1]+[E_{21},K_2])m-K_1[E_{12},m]-K_2[E_{21},m]\ .
\end{align}
We take the initial conditions
\begin{align} \label{startww}
K_1(0)=\left(\begin{array}{cc}1 & 0 \\0 & 2\end{array}\right) \ ,\ K_2(0)=- \left(\begin{array}{cc}1 & 0 \\0 & 2\end{array}\right)
 \ ,\ m(0)=\frac{1}{\sqrt{6}}\left(\begin{array}{cc}1 & 1 \\ 2 & 0\end{array}\right) 
\end{align}
and plot the numerical solutions for $K_1$ and $K_2$ in Figure~\ref{highden}, together with a check that $[K_1,K_2]=0$.

\begin{figure}
    \centering
    \subfloat[entries of $K_1(t)$]{{\includegraphics[scale=0.33]{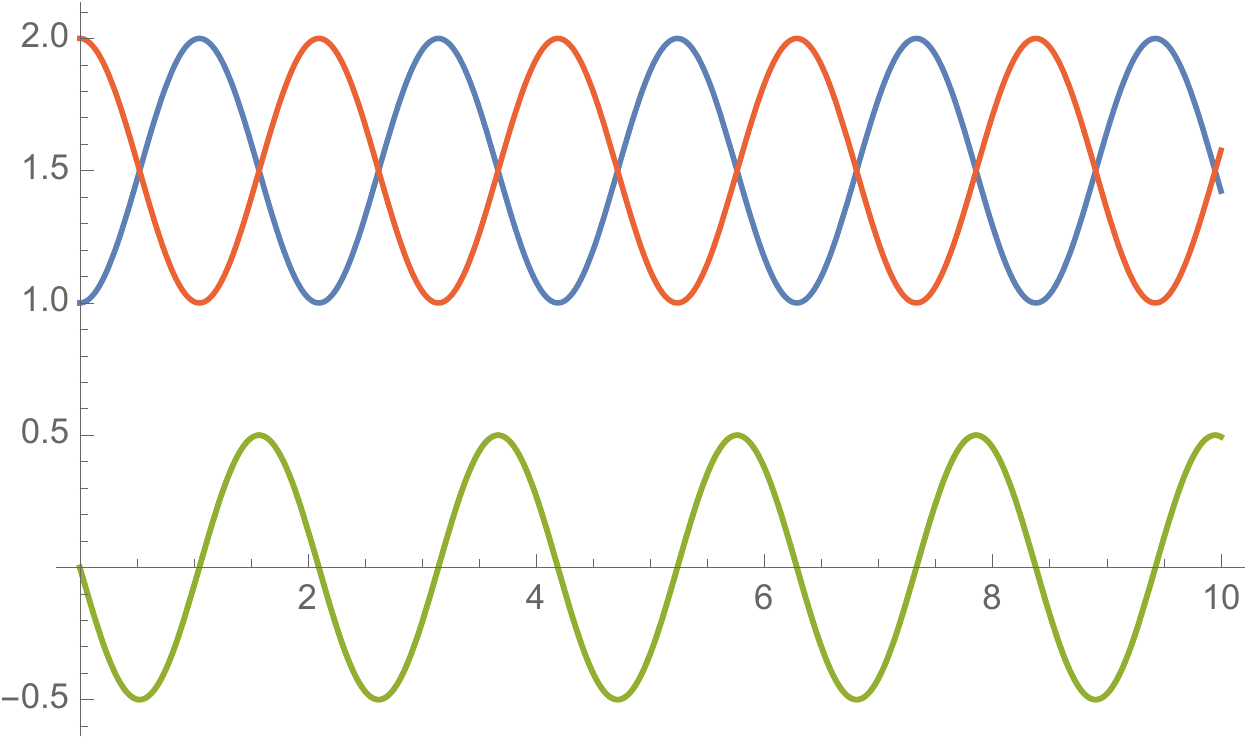}   } }%
    \,
    \subfloat[entries of $K_2(t)$]{{\includegraphics[scale=0.33]{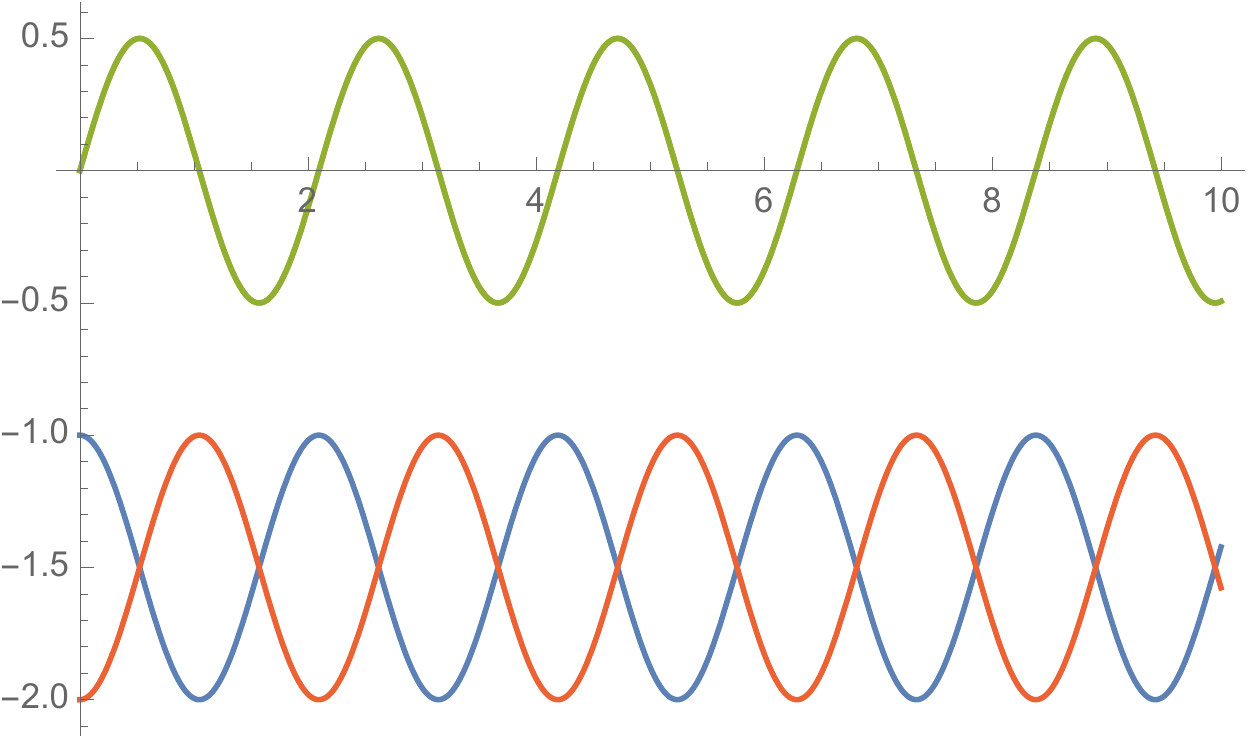} }}%
        \,
    \subfloat[entries of {$ [K_1(t),K_2(t)] $} ]{{\includegraphics[scale=0.33]{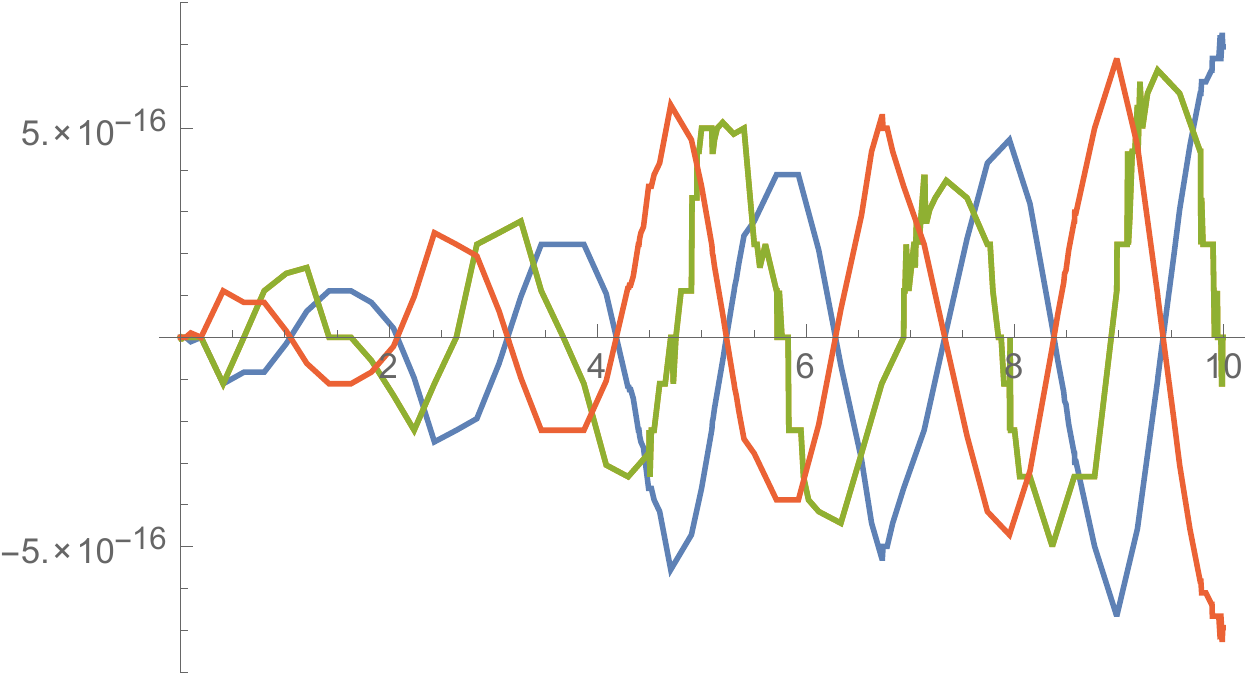} }}%
    \caption{Numerical solution to (\ref{achis}) with initial conditions (\ref{startww}) for $t\in[0,10]$.
    }%
    \label{highden}%
\end{figure}

By \cite{choimat} the pure states on $M_2(\C)$ are given by 
\begin{align} \label{m2st}
\phi\left(\begin{array}{cc}a & b \\c & d\end{array}\right)=
\left(\begin{array}{cc}\lambda & \mu\end{array}\right)
\left(\begin{array}{cc}a & b \\c & d\end{array}\right)
\left(\begin{array}{c}\lambda^* \\ \mu^*\end{array}\right)=|\lambda|^2a+\lambda\mu^*b+\lambda^*\mu c+|\mu|^2 d
\end{align}
for $\lambda,\mu\in\C$ with $|\lambda|^2+|\mu|^2=1$ chosen up to a multiple, so they are in 1-1 correspondence with $\mathbb{CP}^1$. 
If we set $x+\mathrm{i}\,y=\lambda\mu^*$ then the pure states can be written as
\begin{align} \label{m2s2t}
\phi\left(\begin{array}{cc}a & b \\c & d\end{array}\right)=
(\tfrac12-s)a+(x+\mathrm{i}\,y)b+(x-\mathrm{i}\,y) c+(\tfrac12+s) d
\end{align}
where $(s,x,y)\in\R^3$ with $x^2+y^2+s^2=\frac14$. The set of all normalised states is the points in the closed solid ball $x^2+y^2+s^2\le \frac14$.
We can find $(s,x,y)$ from $\phi$ by
\[
s=\tfrac{1}{2} \,\phi\big(\begin{smallmatrix} -1 & 0 \\ 0 & 1 \end{smallmatrix}\big)\ ,\ 
x=\tfrac{1}{2}\, \phi\big(\begin{smallmatrix} 0 & 1 \\ 1 & 0 \end{smallmatrix}\big)\ ,\ 
y=\tfrac{1}{2} \, \phi\big(\begin{smallmatrix} 0 & -\mathrm{i}  \\ \mathrm{i} & 0 \end{smallmatrix}\big)\ .
\]

 \begin{figure}
    \centering
    \subfloat[ Graph of $(s,x,y)$]{{\includegraphics[scale=0.33]{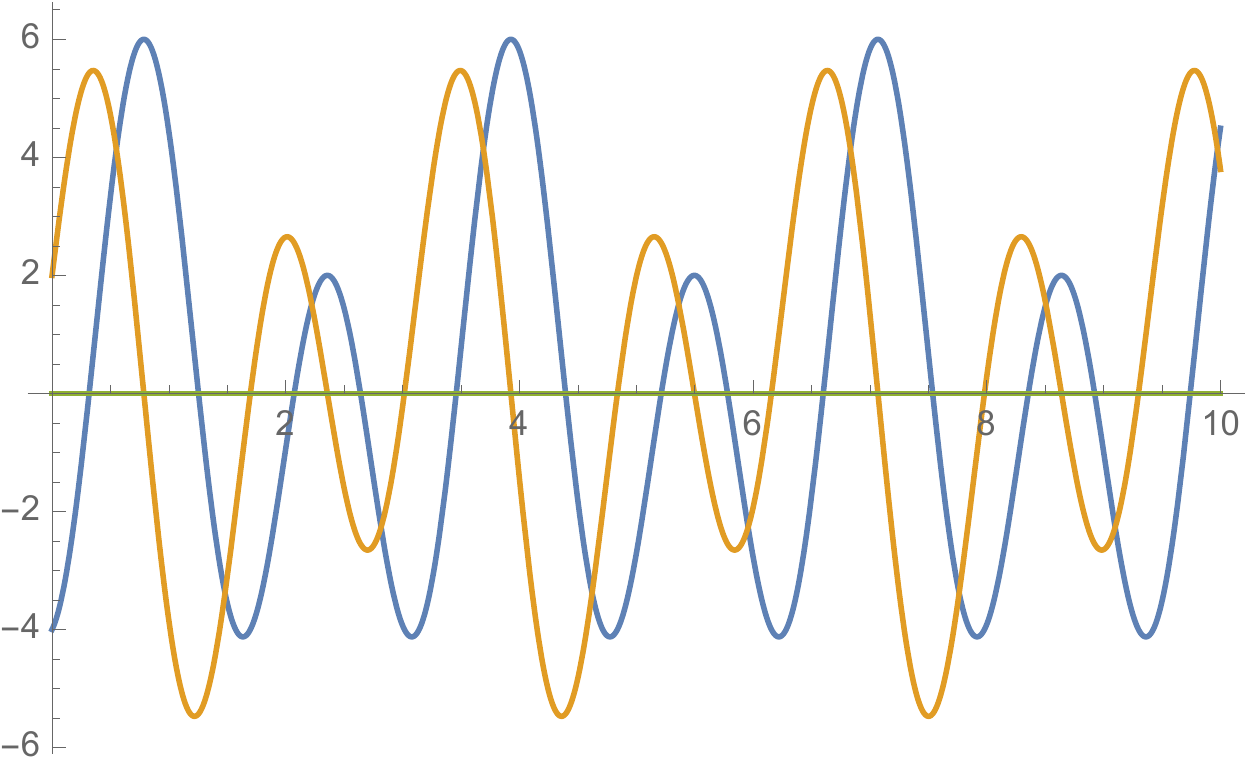}   } }%
    \,
    \subfloat[path in state space]{{\includegraphics[scale=0.25]{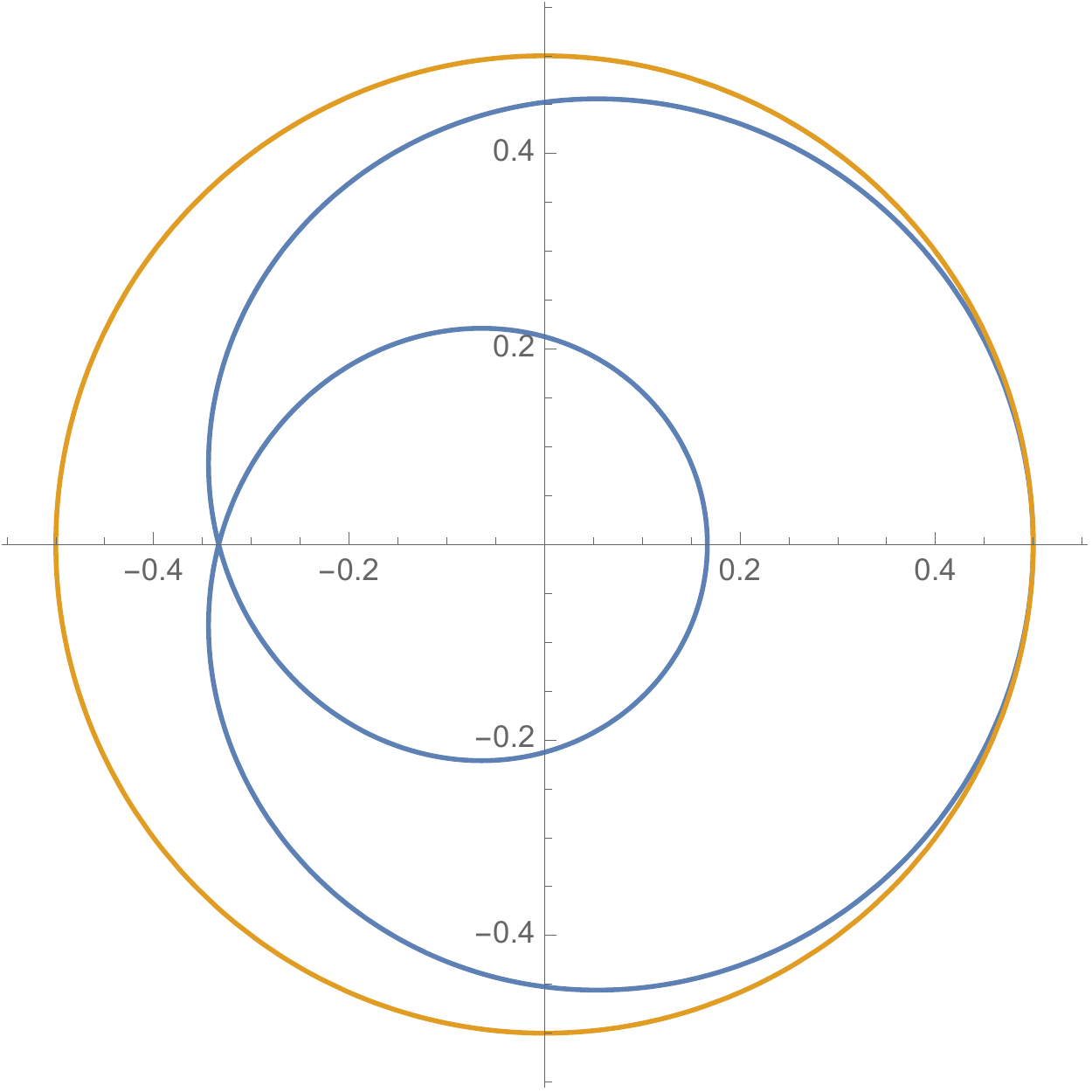} }}%
        \,
            \subfloat[normalisation check]{{\includegraphics[scale=0.33]{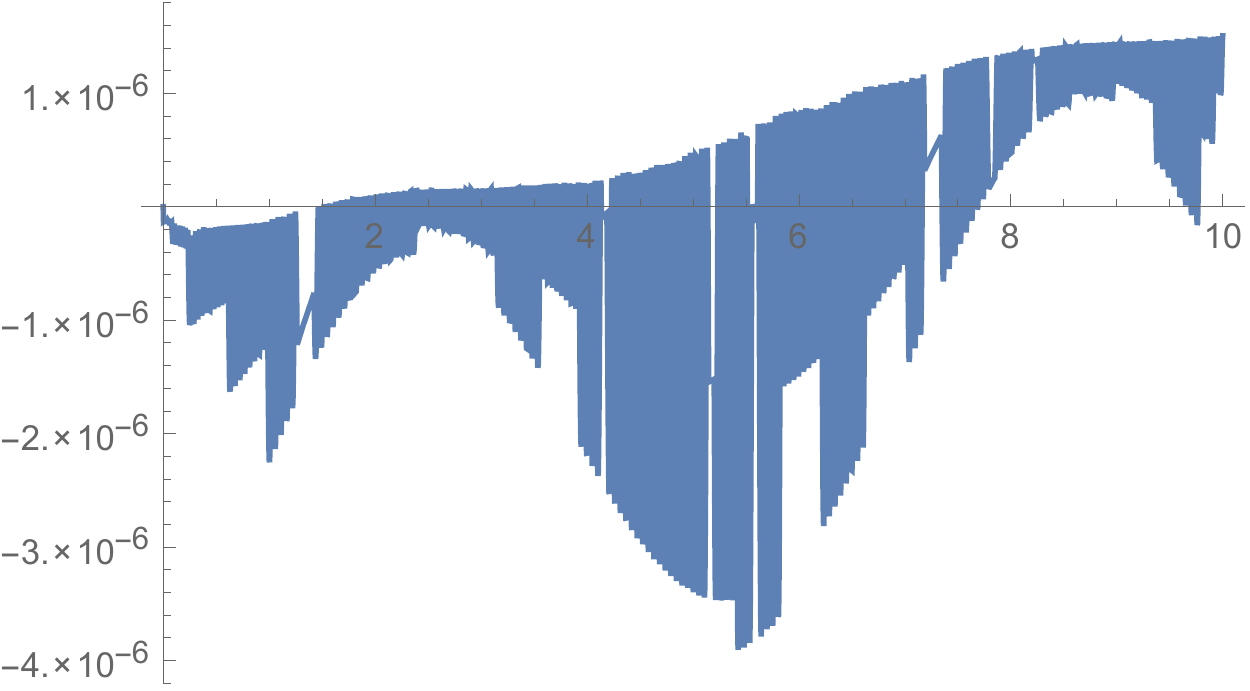} }}%
    \caption{Numerical solution to (\ref{amm}) with initial conditions (\ref{startww}), $t\in[0,10]$.
    }%
    \label{highden28}%
\end{figure}

Now $\phi(r)=\<m(t) r,\overline{m(t)}\>=\mathrm{trace}(mrm^*)$ for $r\in M_2(\C)$ gives a time dependent state on $M_2(\C)$, and the corresponding state is plotted in Figure~\ref{highden28}, with (a) a plot of the $(s,x,y)$ coordinates against time, and as $y=0$ for the initial conditions we can plot the path of the state in $(s,x)$ coordinates with the pure states given by the circle in (b). In (c) we plot 
$\phi(1)-1$ to check the normalisation of the state.  \hfill $\diamond$
 \end{example}

\medskip For a second example on the same algebra $A=M_2(\C)$ we consider another right module $N$ for $M_2(\C)$, and then set $M=C^\infty(\R)\tens N$. We then consider the equation
$\doublenabla(\sigma_M)=0$ and the corresponding flow, although this has no direct classical geodesic justification for this module. However, it does have the interesting property that it gives a familiar flow on the pure state space.

 \begin{example}
 For the algebra $A=M_2(\C)$ set the right $A$-module
 $N=\mathrm{Row}^2(\C)$ (the two dimensional row vectors), and then the inner product $\<n',\overline{n}\>=n'\,n^*\in\C$ gives the pure states, 
 as setting $n=(\lambda,\mu)$ (if non-zero) gives $\phi(r)=\<nr,\overline{n}\>/\<n,\overline{n}\>$ for $r\in M_2(\C)$ in
  (\ref{m2st}). Set $M=C^\infty(\R)\tens N$, and then
the possible bimodule maps $\sigma_M: M\tens_{A} \Omega^1_A\to \Omega^1(\R)\tens_{C^\infty(\R)} M$ are given by
\[
\sigma_M(w\tens s^i)=\extd t\tens Q_i\,w
\]
for $w\in \mathrm{Row}^2(\C)$ and $Q_i\in C^\infty(\R)$. Then for $f\in C^\infty(\R)$ we can take
\[
\nabla_M(f\,w)=\frac{\extd f}{\extd t} \,\extd t\tens w -\extd t\, f\tens w\big(Q_1\,E_{12}+Q_2\,E_{21}+Q_0\,I_2\big)
\]
for some $Q_0\in C^\infty(\R)$. 
We check the braid relation (\ref{outfd}) by calculating
\begin{align*}
(\sigma_{\Omega^1(\R)}\tens\id) (\id\tens\sigma_M)  (\sigma_M\tens\id)(f\,w\tens s^i\tens s^j) &=
(\sigma_{\Omega^1(\R)}\tens\id) (\id\tens\sigma_M)(\extd t\tens Q_i\,f\,w\tens s^j) \\
&= \extd t\tens \extd t\tens Q_jQ_i\,f\,w\ , \\
(\id\tens\sigma_M)    (\sigma_M\tens\id)  (\id\tens\sigma_A)(f\,w\tens s^i\tens s^j) &=
(\id\tens\sigma_M)    (\sigma_M\tens\id)(f\,w\tens s^j\tens s^i)\\
&= \extd t\tens \extd t\tens Q_jQ_i\,f\,w\ .
\end{align*}
Now Proposition~\ref{bcuow} shows that $\doublenabla(\sigma_M)$ is a bimodule map, so to show that $\doublenabla(\sigma_M)$ vanishes it is only necessary to show that it vanishes on generators.
Now
\begin{align*}
(\id\tens\sigma_M)
\nabla_{M\tens\Omega^1_A}( w\tens s^i)  &=  -\extd t\tens \sigma_M\big( w\big(Q_1\,E_{12}+Q_2\,E_{21}+Q_0\,I_2\big)\tens s^i\big)\\
&=   -\extd t\tens \extd t\tens  w\big(Q_1\,E_{12}+Q_2\,E_{21}+Q_0\,I_2\big)Q_i\ .
\end{align*}
and
\begin{align*}
\nabla_{\Omega^1(\R)\tens M}\sigma_M( w\tens s^i)
 &=\nabla_{\Omega^1(\R)\tens M}(Q_i\,\extd t\tens w) \\
 &= \frac{\extd Q_i}{\extd t} \,\extd t\tens \extd t\tens  w -\extd t\tens Q_i\,\extd t\tens w\big(Q_1\,E_{12}+Q_2\,E_{21}+Q_0\,I_2\big)
\end{align*}
  so $\doublenabla(\sigma_M)=0$ implies that $Q_1$and $Q_2$
 are constant and $Q_0$ is arbitrary. 
Now set $m=(\lambda ,\mu )(t)$ and then $\nabla_M(m)=0$ gives
\begin{align*}
\big(\frac{\extd \lambda }{\extd t} , \frac{\extd \mu }{\extd t}\big)=\big( Q_0\,\lambda +Q_2\,\mu ,
Q_0\,\mu  + Q_1\,\lambda  \big)
\end{align*}
Putting $z=\lambda /\mu $ we get $\frac{\extd z}{\extd t}=Q_2-Q_1\,z^2$. In terms of the action of
$SL_2(\C)$ on the Riemann sphere $\C_\infty$ of pure states by M\"obius transformations
\[
\left(\begin{array}{cc}a & b \\c & d\end{array}\right)\la z=\frac{az+b}{cz+d}
\]
the time action is given by the one parameter group given by 
$\exp\big(t\big(\begin{smallmatrix} 0 & Q_2  \\ Q_1 & 0 \end{smallmatrix}\big)\big)$. 

We have not yet checked whether the connection preserves the inner product, but rather have allowed a possibly varying normalisation for the positive map. It is easy to check that the condition for preserving the inner product is $Q_0{}^*=-Q_0$ and $Q_2=-Q_1{}^*$, and that the normalisation of the positive maps is then constant in time.
\hfill $\diamond$
  \end{example}

\section{Appendix: Numerical calculations}
The Mathematica code for the numerical simulation and drawing Figure~\ref{highden35} (a) in Section~\ref{z3exsect} follows. Here $K_+(i)$ is kp$i$, 
$K_-(i)$ is km$i$, and $m(i)$ is m$i$ for $i\in\{0,1,2\}$.

    \smallskip\noindent
{\footnotesize{ 
sspp = NDSolve[\{
   kp1'[t] == kp1[t] (kp0[t] + km2[t] - kp2[t] - km1[t])/2,
   kp2'[t] == kp2[t] (kp1[t] + km0[t] - kp0[t] - km2[t])/2, 
   kp0'[t] == kp0[t] (kp2[t] + km1[t] - kp1[t] - km0[t])/2,
   km1'[t] == km1[t] (kp0[t] + km2[t] - kp1[t] - km0[t])/2,
   km2'[t] == km2[t] (kp1[t] + km0[t] - kp2[t] - km1[t])/2, 
   km0'[t] == km0[t] (kp2[t] + km1[t] - kp0[t] - km2[t])/2,
   km1[0] == 1, km2[0] == Exp[2 I], km0[0] == Exp[3 I], 
   kp1[0] == -Exp[-3 I], kp2[0] == -1, kp0[0] == -Exp[-2 I], 
   m0'[t] == - m0[t] (-kp0[t] + kp1[t] - km0[t] + km2[t])/2 - 
     kp0[t] (m0[t] - m2[t]) - km0[t] (m0[t] - m1[t]),
   m1'[t] == -m1[t] (-kp1[t] + kp2[t] - km1[t] + km0[t])/2 - 
     kp1[t] (m1[t] - m0[t]) - km1[t] (m1[t] - m2[t]),
   m2'[t] == -m2[t] (-kp2[t] + kp0[t] - km2[t] + km1[t])/2 - 
     kp2[t] (m2[t] - m1[t]) - km2[t] (m2[t] - m0[t]), 
   m0[0] == 1/Sqrt[2], m1[0] == 0, m2[0] == 1/Sqrt[2]
   \}, \{km1, km2, km0, kp1, kp2, kp0, m0, m1, m2\}, \{t, 0, 10\}]
   }}
   
   \medskip\noindent
   {\footnotesize{ 
   Plot[Evaluate[\{(Abs[m0[t]]$\wedge$2), (Abs[m0[t]]$\wedge$2 + Abs[m1[t]]$\wedge$2), 
    Abs[m0[t]]$\wedge$2 + Abs[m1[t]]$\wedge$2 + Abs[m2[t]]$\wedge$2\} /. sspp], \{t, 0, 10\}, 
 PlotRange -$>$ All]
   }}

\medskip
\noindent
The Mathematica code for the numerical simulation and drawing Figure~\ref{highden28} (b) in Section~\ref{sectmatrix} follows: Here $m$ is
\{\{ma,mb\},\{mc,md\}\}, $K_1$ is \{\{a1,b1\},\{c1,d1\}\}, and similarly for $K_2$. 

    \smallskip\noindent
{\footnotesize{ 
sst = NDSolve[\{
   Derivative[1][a1][
     t] == (-c1[t] (a1[t] + d1[t]) + b1[t] (a2[t] + d2[t]))/2, 
   Derivative[1][b1][t] == (a1[t]$\wedge$2 - d1[t]$\wedge$2)/2, 
   Derivative[1][c1][t] == ((-a1[t] + d1[t]) (a2[t] + d2[t]))/2, 
   Derivative[1][d1][
     t] == (c1[t] (a1[t] + d1[t]) - b1[t] (a2[t] + d2[t]))/2, 
   Derivative[1][a2][
     t] == (-c2[t] (a1[t] + d1[t]) + b2[t] (a2[t] + d2[t]))/2,
   Derivative[1][b2][t] == ((a1[t] + d1[t]) (a2[t] - d2[t]))/2,
   Derivative[1][c2][t] == (-a2[t]$\wedge$2 + d2[t]$\wedge$2)/2,
   Derivative[1][d2][
     t] == (c2[t] (a1[t] + d1[t]) - b2[t] (a2[t] + d2[t]))/2, 
   Derivative[1][ma][
     t] == -(c1[t] ma[t] - 2 a2[t] mb[t] + a1[t] mc[t] + d1[t] mc[t] +
         b2[t] (ma[t] - 2 md[t]))/2,
   Derivative[1][mb][
     t] == -(b2[t] mb[t] + c1[t] mb[t] - 2 b1[t] mc[t] + d1[t] md[t] +
         a1[t] (-2 ma[t] + md[t]))/2,
   Derivative[1][mc][
     t] == -(a2[t] ma[t] - 2 c2[t] mb[t] + b2[t] mc[t] + c1[t] mc[t] +
         d2[t] (ma[t] - 2 md[t]))/2,
   Derivative[1][md][
     t] == -(a2[t] mb[t] + d2[t] mb[t] - 2 d1[t] mc[t] + b2[t] md[t] +
         c1[t] (-2 ma[t] + md[t]))/2, a1[0] == 1, b1[0] == 0, 
   c1[0] == 0, d1[0] == 2, a2[0] == -1, b2[0] == 0, c2[0] == 0, 
   d2[0] == -2, ma[0] == 1/Sqrt[6], mb[0] == 1/Sqrt[6], 
   mc[0] == 2/Sqrt[6], md[0] == 0
   \}, \{a1, a2, b1, b2, c1, c2, d1, d2, ma, mb, mc, md\}, \{t, 0, 10\}]
   }

    \medskip\noindent
{\footnotesize{ 
ParametricPlot[\{Evaluate[\{-Conjugate[ma[t]] ma[t] + 
       Conjugate[mb[t]] mb[t] - Conjugate[mc[t]] mc[t] + 
       Conjugate[md[t]] md[t], 
      Conjugate[mb[t]] ma[t] + Conjugate[ma[t]] mb[t] + 
       Conjugate[md[t]] mc[t] + Conjugate[mc[t]] md[t]\} /. sst]/
   2, \{Cos[t]/2, Sin[t]/2\}\}, \{t, 0, 10\}]
   }

\end{document}